\newtheorem{prop}{Proposition}[section]
\newtheorem{thm}[prop]{Theorem}
\newtheorem{lem}[prop]{Lemma}
\newtheorem{defn}[prop]{Definition}
\theoremstyle{definition}
\newtheorem*{ack}{Acknowledgments}
\theoremstyle{remark}
\numberwithin{equation}{section}
\begin{document}

\title[Stability of Alexandrov-Fenchel type inequalities]{Stability of Alexandrov-Fenchel Type Inequalities for Nearly Spherical Sets in Space Forms}

\author[R. Zhou]{Rong Zhou}
\address{School of Mathematical Sciences, University of Science and Technology of China, Hefei 230026, P.R. China}
\email{\href{mailto:zhourong@mail.ustc.edu.cn}{zhourong@mail.ustc.edu.cn}}
\author[T. Zhou]{Tailong Zhou}
\email{\href{mailto:ztl20@ustc.edu.cn}{ztl20@ustc.edu.cn}}
\date{\today}
\subjclass[2020]{52A40; 53C42}
\keywords{quermassintegrals, weighted curvature integrals, nearly spherical sets, isoperimetric deficit}

\begin{abstract}
In this paper, we first derive a quantitative quermassintegral inequality for nearly spherical sets in $\mathbb{H}^{n+1}$ and $\mathbb{S}^{n+1}$, which is a generalization of the quantitative Alexandrov-Fenchel inequality proved in $\mathbb{R}^{n+1}$ \cite{VanBlargan2022QuantitativeQI}. Then we use this method to derive the stability of some geometric inequalities involving weighted curvature integrals and quermassintegrals for nearly spherical sets in $\mathbb{R}^{n+1}$ and $\mathbb{H}^{n+1}$.
\end{abstract}

\maketitle



\section{Introduction}
In this paper, we will discuss the stability of Alexandrov-Fenchel inequalities involving quermassintegrals and related weighted curvature integral inequalities for nearly spherical sets in space forms $\mathbb{N}^{n+1}(K)$ with constant curvature $K=1,-1,0$.

Let $\Omega$ be a bounded domain in $(\mathbb{N}^{n+1}(K),\overline{g})$, which is star-shaped with respect to the origin $O$. Denote $M=\partial\Omega$ and suppose that $M$ can be parametrized as $M=\{(\rho(1+u(x)),x):x\in\mathbb{S}^n\}$ in the polar coordinates, where $u:\mathbb{S}^n\to (-1,+\infty)$ is a $C^3$ function, $\rho>0$ is a constant. We call that $M$ is a nearly spherical set, if there exists a small constant $\varepsilon>0$ such that $\|u\|_{W^{2,\infty}(\mathbb{S}^n)}<\varepsilon$. 

Fuglede \cite{Fuglede1986StabilityIT,Fuglede1989StabilityIT} studied the stability of the isoperimetric inequality for nearly spherical sets in Euclidean space with $\rho=1$. Let\ $\Omega\subset\mathbb{R}^{n+1}$ be a domain with nearly spherical boundary $\partial\Omega=M=\{(1+u(x),x):x\in\mathbb{S}^n\}$, where $\|u\|_{C^1(\mathbb{S}^n)}<\varepsilon$. Under the condition 
\begin{equation}
	\mathrm{Vol}(\Omega)=\mathrm{Vol}(B),\ \mathrm{bar}(\Omega)=O,  \label{condition}
\end{equation}
\textcolor{black}{where $B$ is the unit ball centered at $O$ in $\mathbb{R}^{n+1}$}, $\mathrm{bar}(\Omega) = \dfrac{1}{\mathrm{Area}(\mathbb{S}^n)}\displaystyle\int_{\mathbb{S}^n} (1+u)^{n+2}x\mathrm{d}A$ is the barycenter of $\Omega$, he gave a lower bound for the isoperimetric deficit
\begin{equation*}
	\bar{\delta}_{0,-1}(\Omega) := \dfrac{\mathscr{A}_0(\Omega)- \mathscr{A}_0(B)}{\mathscr{A}_0(B)}
\end{equation*}
concerning $\|u\|_{L^2(\mathbb{S}^n)}^2+\|\nabla u\|_{L^2(\mathbb{S}^n)}^2$, which is vanishing if and only if $\Omega$ is the unit ball. Here $\mathscr{A}_0(\cdot)$ is the 0th quermassintegral of a domain, which measures the perimeter of the boundary (see section \ref{quermass-subsec} for the definition of the $k$th quermassintegrals $\mathscr{A}_k(\cdot),\ k=-1,\cdots,n$). In specific, there exists a constant $C$ independent of $\Omega$ such that
\begin{equation}
	\bar{\delta}_{0,-1}(\Omega)\geqslant C\|u\|_{W^{1,2}(\mathbb{S}^n)}^2, \label{Fuglede'est}
\end{equation}
and the sharp quantitative isoperimetric inequality follows.

In \cite{Cicalese2012ASP}, Cicalese and Leonardi proved a weak version of Fuglede's estimate\ (\ref{Fuglede'est}) under the same condition (\ref{condition}) as Fuglede's. They estimated the upper bound of the Fraenkel asymmetry $\bar{\alpha}(\Omega)$ in Euclidean space, where
\begin{equation}
	\bar{\alpha}(\Omega) = \inf\left\lbrace \dfrac{\mathrm{Vol}\left(\Omega\Delta B_\rho(x)\right)}{\mathrm{Vol}(B_{\rho})} :x\in\mathbb{R}^{n+1},\mathrm{Vol}(\Omega)=\mathrm{Vol}(B_{\rho})\right\rbrace \label{alphaomegaeuclidean}
\end{equation}
measures the $L^1$-distance between the set $\Omega$ and an optimal geodesic ball of the same volume. 

The quantitative isoperimetric inequality in $\mathbb{R}^{n+1}$ asks if there exists a constant $C(n)>0$ such that for all Borel sets $\Omega$ with finite measure satisfy
\begin{equation*}
	\bar{\delta}_{0,-1}(\Omega) \geqslant  C(n)\bar{\alpha}^m (\Omega)
\end{equation*}
for some exponent $m$. 

If we write $u=\sum\limits_{k=0}^{\infty}a_kY_k$, where $\{Y_k\}_{k=0}^{\infty}$ corresponds the spherical harmonics which forms an orthonormal basis for $L^2(\mathbb{S}^n)$, then \eqref{condition} implies
\begin{equation}
	a_0^2 = O(\varepsilon)\|u\|_{L^2(\mathbb{S}^n)}^2,\ 	a_1^2=O(\varepsilon)\|u\|_{L^2(\mathbb{S}^n)}^2,  \label{a1epwilon}
\end{equation}
and	consequently the isoperimetric deficit $\overline{\delta}_{0,-1}(\Omega)$ is bounded from below by $\|u\|_{L^2(\mathbb{S}^n)}^2+\|\nabla u\|_{L^2(\mathbb{S}^n)}^2$. With this observation, authors of \cite{Cicalese2012ASP} used the Fraenkel asymmetry $\overline{\alpha}(\Omega)$ to characterize the stability of the isoperimetric inequality and proved that
\begin{equation}
	\bar{\delta}_{0,-1}(\Omega) \geqslant \left( C(n) + O(\varepsilon)\right)\bar{\alpha}^2 (\Omega).
\end{equation}
The methods that Fuglede's estimate used can also be applied to prove the Minkowski inequality for nearly spherical sets in Euclidean space, interested readers can refer to Glaudo's work \cite{Glaudo2021MinkowskiIF}.
\subsection{Quantitative Alexandrov-Fenchel inequalities in space forms}
Quantitative isoperimetric inequalities for nearly spherical sets can also be considered in hyperbolic space and on the sphere. In a general space form $\mathbb{N}^{n+1}(K),\ K=-1,1$, \eqref{alphaomegaeuclidean} and (\ref{fugger}) is not invariant under scaling, so now the isoperimetric deficit $\delta_{0,-1}(\Omega)$ for $\Omega$ is defined as
\begin{equation}
	\delta_{0,-1}(\Omega) = \mathscr{A}_0(\Omega) - \mathscr{A}_0(\overline{B}_{\rho}),
\end{equation}
\textcolor{black}{where $\overline{B}_{\rho}$ is the geodesic ball with radius $\rho$ centered at $O$ in $\mathbb{N}^{n+1}(K)\ (K=-1,1)$}, and the definition for the Fraenkel asymmetry is adjusted in the following rescaled way:
\begin{defn}[\cite{Bgelein2015ASQ}]
	\label{asymmetry}
	The Fraenkel asymmetry of $\Omega\textcolor{black}{\subset\mathbb{N}^{n+1}(K)\ (K=-1,1)}$, denoted by $\alpha(\Omega)$,  is defined as
	\begin{equation}\label{F-Asym-rescale}
		\alpha(\Omega) = \inf\left\lbrace\mathrm{Vol}\left( \Omega\Delta \overline{B}_{\rho}(x)\right):x\in\mathbb{N}^{n+1}(K),\mathrm{Vol}(\Omega)=\mathrm{Vol}(\overline{B}_{\rho}(x)) \right\rbrace,
	\end{equation}
	where  $\Delta$ is the symmetric difference between two sets.
\end{defn}
In \cite{Bgelein2015ASQ,Bgelein2016AQI}, B{\"o}gelein, Duzaar, Scheven and Fusco generalized Fuglede's estimate to
\begin{equation}
	\dfrac{\delta_{0,-1}(\Omega)}{\mathscr{A}_0(\overline{B}_{\rho}))}\geqslant C\left(\dfrac{\alpha(\Omega)}{\mathscr{A}_{-1}(\overline{B}_{\rho})} \right)^2 \label{fugger}
\end{equation} 
for nearly spherical domain $\Omega\subset\mathbb{N}^{n+1}(K)$ enclosed by $M=\{(\rho(1+u(x)),x):x\in\mathbb{S}^n\}$ in $\mathbb{N}^{n+1}(K)\ (K=-1,1)$. They have defined the barycenter of a set in space forms, which is crucial to derive a technical Poincar\'e-type estimate.

\begin{defn}[The barycenter of a set in $\mathbb{N}^{n+1}(K)$ \cite{Bgelein2015ASQ,Bgelein2016AQI}]
	\label{defofbary}
	The barycenter of a set $\Omega\subset \mathbb{N}^{n+1}(K)$, denoted by $\mathrm{bar}(\Omega)$, is defined as a minimizer $p\in \mathbb{N}^{n+1}(K)$ of the function
	\begin{equation}
		p\mapsto \int_{\Omega} d_K^2(y,p)\mathrm{d}\mu_K(y).
	\end{equation}
	where $d_K(y,p)$ represents the geodesic distance between $y\in\Omega$ and $p$, $\mathrm{d}\mu_K(y)$ is the measure respect to $\mathbb{N}^{n+1}(K)$.
\end{defn}

In 1930s, Alexandrov and Fenchel proved the following inequalities for convex body $\Omega$ in $\mathbb{R}^{n+1}$:
\begin{equation}\label{af-ineq}
	\frac{\mathscr{A}_k(\Omega)}{{n\choose k}\mathrm{Area}(\mathbb{S}^n)}\leq\left(\frac{\mathscr{A}_l(\Omega)}{{n\choose l}\mathrm{Area}(\mathbb{S}^n)}\right)^{\frac{n-k}{n-l}},\ -1\leq k<l\leq n.
\end{equation}
Equality holds if and only if $\Omega$ is a ball. When $l=-1,\ k=0$, \eqref{af-ineq} reduces to the classic isoperimetric inequality. Guan and Li \cite{GL09} proved the Alexandrov-Fenchel inequalities \eqref{af-ineq} for all star-shaped and $k$-convex domains in $\mathbb{R}^{n+1}$, using curvature flow method. It is conjectured that for domain $\Omega$ in space form $\mathbb{N}^{n+1}(K),\ K=\pm 1,$ satisfying suitable convexity, the Alexandrov-Fenchel inequalities take the form of (see for instance in \cite{B-G-L})
\begin{equation}\label{af-ineq-sf}
	\mathscr{A}_k(\Omega)\leq\left(\psi_k\circ\psi_l^{-1}\right)\left(\mathscr{A}_l(\Omega)\right),\ -1\leq k<l\leq n,
\end{equation}
where $\psi_m(\rho):=\mathscr{A}_m(\bar{B}_\rho)$.

Our work is motivated by VanBlargan and Wang \cite{VanBlargan2022QuantitativeQI}, they proved the quantitative Alexandrov-Fenchel inequalities for nearly spherical sets in $\mathbb{R}^{n+1}$. For $0\leqslant j<k$, $1\leqslant k\leqslant n-1$, let $\Omega$ be enclosed by a nearly spherical set $M=\{(1+u(x),x):x\in\mathbb{S}^n\}$. Under the condition
\begin{equation}
	\mathscr{A}_j(\Omega) = \mathscr{A}_j(B),\ \ \ \mathrm{bar}(\Omega)=O, \label{conditionquer}
\end{equation}
they proved
\begin{equation}\label{aklowebound}
\mathscr{A}_k(\Omega) - \mathscr{A}_k(B)\geqslant C(n,k,j) \left(  (1+O(\varepsilon))\|u\|_{L^2(\mathbb{S}^n)}^2 + \left(\dfrac{1}{2}+O(\varepsilon)\right)\|\nabla u\|_{L^2(\mathbb{S}^n)}^2  \right),
\end{equation}
where $C(n,k,j)=\displaystyle {n\choose k} \dfrac{(n-k)(k-j)}{2n}$. Then the $(k,j)$-isoperimetric deficit
\begin{equation}
	\bar{\delta}_{k,j}(\Omega) = \dfrac{\mathscr{A}_k(\Omega)-\mathscr{A}_k(B_{\Omega,j})}{\mathscr{A}_k(B_{\Omega,j})} , \label{kjdeficit}
\end{equation}
where $B_{\Omega,j}$ is the ball centered at $O$ such that $\mathscr{A}_j(\Omega)=\mathscr{A}_j(B_{\Omega,j})$, is bounded form below by the Fraenkel asymmetry:
\begin{equation}
	\bar{\delta}_{k,j}(\Omega) \geqslant \left( \dfrac{n(n-k)(k-j)}{4(n+1)^2}+O(\varepsilon) \right)\overline{\alpha}^2(\Omega).\label{deltakjresult}
\end{equation}

In general space forms, we also consider the rescaled isoperimetric deficit in contrast to (\ref{kjdeficit}) in Euclidean space.
\begin{defn}[The $(k,j)$-isoperimetric deficit]
	Let $\Omega$ be a domain in $\mathbb{N}^{n+1}(K)$. For any given $0\leqslant k\leqslant n$, $-1\leqslant j<k$, define the $(k,j)$-isoperimetric deficit for $\Omega$, denoted by $\delta_{k,j}(\Omega)$, as
	\begin{equation}
		\delta_{k,j}(\Omega) = \mathscr{A}_k(\Omega) - \mathscr{A}_k(\overline{B}_{\Omega,j}),
	\end{equation}
	where $\overline{B}_{\Omega,j}$ is a geodesic ball centered at $O$ in $\mathbb{N}^{n+1}(K)$ which satisfies
	$\mathscr{A}_j(\Omega) = \mathscr{A}_j(\overline{B}_{\Omega,j})$.
\end{defn}

The first goal of this paper is to give the quantitative version of Alexandrov-Fenchel inequalities \eqref{af-ineq-sf} in space forms under the same assumption (\ref{conditionquer}) as in \cite{VanBlargan2022QuantitativeQI,Bgelein2015ASQ,Bgelein2016AQI}, as a generalization of (\ref{deltakjresult}): For $\Omega$ enclosed by nearly spherical sets $M$ in hyperbolic space $\mathbb{H}^{n+1}$ and on the sphere $\mathbb{S}^{n+1}$,
\begin{equation}
	\delta_{k,j}(\Omega)\geqslant C\alpha^2(\Omega),
\end{equation}
where\ $C$ is a constant independent of $\Omega$. We make an effort to derive the expression for $\mathscr{A}_k(\Omega)-\mathscr{A}_k(\overline{B}_{\rho})\ (-1\leqslant k\leqslant n)$. Our result is as follows:
\begin{thm}\label{mainresult}
	Let $M=\{(\rho(1+u(x)),x):x\in\mathbb{S}^n\}$ be a nearly spherical set in $\mathbb{N}^{n+1}(K)\ (K=-1,1)$, 
	where $u\in C^3(\mathbb{S}^n)$. $\Omega$ is a bounded subset which is star-shaped with respect to $O$ and is enclosed by $M$. Suppose $0\leqslant k\leqslant n-1$, $-1\leqslant j<k$, if both of the following hold
	\begin{enumerate}[label={(\arabic*)},parsep=0cm,itemsep=0cm,topsep=0cm]
		\item $\mathscr{A}_j(\Omega) = \mathscr{A}_j(\overline{B}_{\rho})$,
		\item $\mathrm{bar}(\Omega)=O$,
	\end{enumerate}
	then for any $\eta>0$, there exists $\varepsilon>0$, such that when $\|u\|_{W^{2,\infty}(\mathbb{S}^n)}<\varepsilon$, we have
	\begin{equation}
		\delta_{k,j}(\Omega) \geqslant \left( \dfrac{n(n-k)(k-j)}{4\mathrm{Area}(\mathbb{S}^n)}{n\choose k} \dfrac{\phi'^k(\rho)}{\phi^{n+k+2}(\rho)} -\eta \right)\alpha^2(\Omega),
	\end{equation}
	where $\phi$ is defined in \eqref{eq-phi}.
\end{thm}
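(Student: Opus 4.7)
I would follow the scheme of VanBlargan--Wang \cite{VanBlargan2022QuantitativeQI}, adapted to the warped-product geometry of $\mathbb{N}^{n+1}(K)$. The first and heaviest step is to derive a second-order Taylor expansion of each quermassintegral around the reference ball $\overline{B}_\rho$, up to an $O(\varepsilon)\|u\|_{W^{1,2}(\mathbb{S}^n)}^2$ remainder. Writing $M$ as a radial graph in the polar metric $d\rho^2+\phi^2(\rho)\,g_{\mathbb{S}^n}$, I would compute the induced metric, outward unit normal, and principal curvatures of the perturbed surface; since the geodesic sphere of radius $\rho$ has constant principal curvature $\phi'(\rho)/\phi(\rho)$, its shape operator is perturbed to $(\phi'/\phi)\,\mathrm{id}+\text{linear in }u,\nabla u,\nabla^2 u$, so each elementary symmetric function $\sigma_k$ admits a clean Taylor expansion. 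After integrating on $\mathbb{S}^n$ and moving second derivatives by parts, I expect a decomposition of the schematic form
\begin{equation*}
	\mathscr{A}_k(\Omega)-\mathscr{A}_k(\overline{B}_\rho)=\alpha_k(\rho)\int_{\mathbb{S}^n}u\,dA+Q_k(\rho)(u)+R_k(u),
\end{equation*}
with $\alpha_k(\rho)$ and the quadratic form $Q_k(\rho)$ explicit in $\phi,\phi'$ and $\binom{n}{k}$, and $R_k(u)=O(\varepsilon)\|u\|_{W^{1,2}(\mathbb{S}^n)}^2$.

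Next, expanding $u=\sum_{m\geqslant 0}a_mY_m$ in spherical harmonics on $\mathbb{S}^n$, the normalization $\mathscr{A}_j(\Omega)=\mathscr{A}_j(\overline{B}_\rho)$ forces the linear-in-$u$ term to vanish to leading order and, modulo $O(\varepsilon)\|u\|_{L^2}^2$, implies $a_0^2\leqslant O(\varepsilon)\|u\|_{L^2}^2$, in analogy with \eqref{a1epwilon}. The barycenter condition, unpacked via Definition \ref{defofbary} in the warped metric following B\"ogelein--Duzaar--Scheven--Fusco \cite{Bgelein2015ASQ,Bgelein2016AQI}, yields the matching bound $a_1^2\leqslant O(\varepsilon)\|u\|_{L^2}^2$. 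Inserting these into $Q_k(\rho)$, together with Parseval's identity and the spectral gap $\int_{\mathbb{S}^n}|\nabla u|^2\geqslant 2(n+1)\int_{\mathbb{S}^n}u^2$ on the $L^2$-orthogonal complement of $Y_0\oplus Y_1$, should produce
\begin{equation*}
	\delta_{k,j}(\Omega)\geqslant\bigl(C_{n,k,j}(\rho)+O(\varepsilon)\bigr)\Bigl(\|u\|_{L^2(\mathbb{S}^n)}^2+\tfrac{1}{2}\|\nabla u\|_{L^2(\mathbb{S}^n)}^2\Bigr),
\end{equation*}
with $C_{n,k,j}(\rho)$ of the form $\tfrac{(n-k)(k-j)}{2n}\binom{n}{k}\phi'^k(\rho)/\phi^{n+k+2}(\rho)$, specializing to \eqref{aklowebound} when $K=0,\ \rho=1$.

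The passage to the Fraenkel asymmetry is the softer step. Since $\Omega$ is $\varepsilon$-close to $\overline{B}_\rho$ and its barycenter is at $O$, the optimal competitor ball in Definition \ref{asymmetry} lies $O(\varepsilon)$-close to a geodesic ball centered at $O$ with radius $\sigma$ near $\rho$. The warped-product volume element $\phi^n(r)\,dr\,dA_{\mathbb{S}^n}$ then gives $\alpha(\Omega)\leqslant(1+O(\varepsilon))\,\rho\,\phi^n(\rho)\,\|u\|_{L^1(\mathbb{S}^n)}$, hence by Cauchy--Schwarz $\alpha^2(\Omega)\leqslant(1+O(\varepsilon))\,\rho^2\,\phi^{2n}(\rho)\,\mathrm{Area}(\mathbb{S}^n)\,\|u\|_{L^2(\mathbb{S}^n)}^2$. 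Combining with the previous estimate and matching constants delivers the claimed inequality with any $\eta>0$ achievable by choosing $\varepsilon$ small.

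The main obstacle is the first step: executing the quadratic expansion of each $\mathscr{A}_k$ in the warped coordinates and distilling the many $\phi$ and $\phi'$ factors into the compact combination $\phi'^k(\rho)/\phi^{n+k+2}(\rho)$ appearing in the theorem. The $\sigma_k$-expansion produces a quadratic polynomial in the perturbed shape operator whose terms mix $u$, $\nabla u$, and $\nabla^2 u$, and the algebraic simplification leans on the identity $(\phi')^2-K\phi^2=1$ to cancel curvature contributions. Once this expansion is in hand, the spherical-harmonic truncation, spectral-gap argument, and asymmetry bound run in parallel to the Euclidean treatment of \cite{VanBlargan2022QuantitativeQI} and the isoperimetric case of \cite{Bgelein2015ASQ,Bgelein2016AQI}.
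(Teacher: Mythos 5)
Your overall architecture coincides with the paper's: expand the shape operator of the radial graph in the warped metric, Taylor-expand $\sigma_k$ and the quermassintegrals to second order in $u$ with $O(\varepsilon)\|u\|_{W^{1,2}}^2$ remainders, use the normalization $\mathscr{A}_j(\Omega)=\mathscr{A}_j(\overline{B}_\rho)$ to eliminate the linear term and control $a_0$, use the barycenter condition to control $a_1$, and then apply the spectral gap $\|\nabla u\|_{L^2}^2\geqslant 2(n+1)\|u\|_{L^2}^2+O(\varepsilon)(\cdots)$. That is exactly Sections 3--4 of the paper, so the heavy part of your plan is on target (one small slip: the relevant identity is $(\phi')^2+K\phi^2=1$, not $(\phi')^2-K\phi^2=1$; also for $j\geqslant 0$ the normalization gives $a_0^2=O(\varepsilon)\bigl(\|u\|_{L^2}^2+\|\nabla u\|_{L^2}^2\bigr)$, with a gradient term, not just $O(\varepsilon)\|u\|_{L^2}^2$).

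The genuine gap is in your final step, the passage to the Fraenkel asymmetry. You bound $\mathrm{Vol}(\Omega\Delta\overline{B}_\Omega)$ by $(1+O(\varepsilon))\rho\,\phi^n(\rho)\|u\|_{L^1}$ and then apply Cauchy--Schwarz to get $\alpha^2(\Omega)\lesssim\rho^2\phi^{2n}(\rho)\,\mathrm{Area}(\mathbb{S}^n)\,\|u\|_{L^2}^2$. The paper instead applies the Poincar\'e inequality on $\mathbb{S}^n$ to the zero-mean function $x\mapsto\int_0^{\rho(1+u(x))}\phi^n(r)\,\mathrm{d}r-\text{(mean)}$, obtaining $\alpha^2(\Omega)\leqslant\frac{1}{n^2}\mathrm{Area}(\mathbb{S}^n)\phi^{2n}(\rho)\rho^2\|\nabla u\|_{L^2}^2+O(\varepsilon)\|\nabla u\|_{L^2}^2$, and pairs this with the $\|\nabla u\|_{L^2}^2$-coefficient $\binom{n}{k}\frac{(n-k)(k-j)}{4n}\phi^{n-k-2}(\rho)\phi'^k(\rho)\rho^2$ of the deficit; the factor $n^2$ from Poincar\'e is what produces the stated constant $\frac{n(n-k)(k-j)}{4\mathrm{Area}(\mathbb{S}^n)}\binom{n}{k}\phi'^k(\rho)\phi^{-(n+k+2)}(\rho)$. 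Your route forces you to pay the deficit against $\|u\|_{L^2}^2$; even converting the whole quadratic form into $\|u\|_{L^2}^2$ via the spectral gap, the best you obtain is the constant $\binom{n}{k}\frac{(n+2)(n-k)(k-j)}{2n\,\mathrm{Area}(\mathbb{S}^n)}\phi'^k(\rho)\phi^{-(n+k+2)}(\rho)$, whose ratio to the theorem's constant is $2(n+2)/n^2<1$ for all $n\geqslant 4$. So for $n\geqslant 4$ your argument proves a strictly weaker inequality than the one asserted. To close the gap, replace the $L^1$/Cauchy--Schwarz step by the Poincar\'e estimate on the radial volume density, so that $\alpha^2(\Omega)$ is controlled by $\|\nabla u\|_{L^2}^2$ with the extra factor $1/n^2$. (Relatedly, your intermediate constant ``$\frac{(n-k)(k-j)}{2n}\binom{n}{k}\phi'^k(\rho)/\phi^{n+k+2}(\rho)$'' has the wrong $\phi$-power for a bound against Sobolev norms of $u$; the exponent $-(n+k+2)$ only appears after dividing out the $\phi^{2n}(\rho)$ coming from the asymmetry estimate.)
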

\subsection{Quantitative weighted quermassintegral inequalities}
For a smooth, star-shaped and $k$-convex (i.e. $\sigma_i(\kappa)\geqslant 0\ (1\leqslant i\leqslant k)$) hypersurface in  $\mathbb{R}^{n+1}$, geometric inequalities involving weighted curvature integrals, with $\Phi$ (a radial function defined in (\ref{Phi})) as the weighting factor, have been studied in \cite{Kwong-Miao15,Girao-Rodrigues,Wei2022NewWG}. More precisely, if $M$ is such a hypersurface enclosing a bounded domain $\Omega$, then 
\begin{equation}
	\int_M \Phi\sigma_k(\kappa) \mathrm{d}\mu_g \geqslant C(n,l,k) \mathscr{A}_l(\Omega)^{\frac{n+2-k}{n-l}},
    \label{weightedinR}
\end{equation}
where $1\leqslant k\leqslant n$, $-1\leqslant l<k$, and equality holds if and only if $\Omega$ is a ball. Moreover, if $M$ is h-convex (i.e. $\kappa_i\geqslant 1\ (1\leqslant i\leqslant n)$) in $\mathbb{H}^{n+1}$, Wei and the second author \cite{Wei2022NewWG} proved
\begin{equation}
	\label{weightedH}
	\int_M \Phi\sigma_k(\kappa)\mathrm{d}\mu_g \geqslant \xi_{k,l}(\mathscr{A}_{l}(\Omega)),
\end{equation}
where $\xi_{k,l}$ is the unique function such that equality holds in (\ref{weightedH}) when $\Omega$ is a geodesic ball, $1\leqslant k\leqslant n$, $-1\leqslant l<k$ and equality holds if and only if $\Omega$ is a geodesic ball. 

Besides $\Phi$, another interesting weighting factor is $\phi'$ defined in (\ref{eq-phi}). In \cite{Locallyconstrained}, the authors compared the following weighted integral of $k$th mean curvature of a smooth h-convex hypersurface to its $l$th quermassintegral: 
\begin{equation}
	\label{weightedH1}
	\int_M \phi'\sigma_k(\kappa)\mathrm{d}\mu_g \geqslant \eta_{k,l}(\mathscr{A}_{l}(\Omega)),
\end{equation}
where $\eta_{k,l}$ is the unique function such that equality holds in (\ref{weightedH1}) when $\Omega$ is a geodesic ball, $1\leqslant k\leqslant n$, $-1\leqslant l<k$ and equality holds if and only if $\Omega$ is a geodesic ball. For $k=1$, the weighted total mean curvature integral is strongly related to the quasilocal mass and the Riemannian Penrose inequality (see for instance in \cite{B-H-W}), and the corresponding weighted Minkowski inequality was proved in \cite{Lima-Girao} for strictly mean convex and star-shaped hypersurfaces. Relevant studies for the inequality \eqref{weightedH1} can be found in \cite{Ge-Wang-Wu,B-G-L}.

Our next goal is to derive the quantitative version of inequalities (\ref{weightedinR}) and (\ref{weightedH}) for nearly spherical sets $M$ in $\mathbb{R}^{n+1}$ and $\mathbb{H}^{n+1}$ characterized by $\alpha^2(\Omega)$ respectively and obtain the quantitative version of (\ref{weightedH1}) for nearly spherical sets in $\mathbb{H}^{n+1}$. In specific, it states that for any fixed $0\leqslant k\leqslant n$, if $\mathscr{A}_j(\Omega)=\mathscr{A}_j(\overline{B}_{\rho})\ (-1\leqslant j<k)$, then
\begin{equation*}
	\int_M \Psi(r)\sigma_k(\kappa)\mathrm{d}\mu_g - \int_{\partial \overline{B}_{\rho}}\Psi(r)\sigma_k(\kappa) \mathrm{d}\mu_g \geqslant C\alpha^2(\Omega),
\end{equation*}
where $\Psi=\Phi\ \mbox{or}\ \phi'$, $\Omega$ is enclosed by $M$ and $C$ is a constant independent of $\Omega$.

In the following, we present the stability of (\ref{weightedinR}) for nearly spherical sets in $\mathbb{R}^{n+1}$. Note that in this case, the weighted curvature integrals are invariant under rescaling, we only consider the stability in a domain $\Omega$ which is close to the unit ball $B$ and characterize the stability by $\overline{\alpha}^2(\Omega)$ defined in (\ref{alphaomegaeuclidean}).
\begin{thm}
	\label{mainresult2}
	Let $M=\{(1+u(x),x):x\in \mathbb{S}^n \}$ be a nearly spherical set in $\mathbb{R}^{n+1}$, where $u\in C^3(\mathbb{S}^n)$. $\Omega\subset\mathbb{R}^{n+1}$ is a bounded subset which is star-shaped with respect to $O$ and is enclosed by $M$. Suppose $0\leqslant k\leqslant n$, $-1\leqslant j<k$, if both of the following hold
	\begin{enumerate}[label={(\arabic*)},parsep=0cm,itemsep=0cm,topsep=0cm]
		\item $\mathscr{A}_j(\Omega) = \mathscr{A}_j(B)$,
		\item $\mathrm{bar}(\Omega)=O$,
	\end{enumerate}
	then for any $\eta>0$, there exists $\varepsilon>0$, such that when $\|u\|_{W^{2,\infty}(\mathbb{S}^n)}<\varepsilon$ holds, we have
	\begin{equation}
		\dfrac{\displaystyle\int_M \Phi\sigma_k(\kappa) \mathrm{d}\mu_g - \displaystyle\int_{\partial B} \Phi\sigma_k(\kappa) \mathrm{d}\mu_g}{\displaystyle\int_{\partial B} \Phi\sigma_k(\kappa) \mathrm{d}\mu_g} \geqslant \left[ \dfrac{n\left((n-k+2)(k-j)+2k-2\right)}{4(n+1)^2} -\eta \right]\overline{\alpha}^2(\Omega), \label{weiinR}
	\end{equation}
	where $\Phi$ is defined in \eqref{Phi}.
\end{thm}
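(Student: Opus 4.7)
The plan is to adapt the approach used for Theorem \ref{mainresult} and the VanBlargan--Wang estimate \eqref{aklowebound}, with the radial weight $\Phi$ as the new ingredient. Parametrize $M$ as the graph $r=1+u(x)$, $x\in\mathbb{S}^n$; then the principal curvatures $\kappa_i$, the area element $\mathrm{d}\mu_g$, and the weight $\Phi(1+u)$ all admit standard second-order expansions in $u,\nabla u,\nabla^2 u$. Multiplying them out and integrating gives
\begin{equation*}
\int_M \Phi\,\sigma_k(\kappa)\,\mathrm{d}\mu_g = \int_{\partial B}\Phi\,\sigma_k(\kappa)\,\mathrm{d}\mu_g + \mathcal{L}_k(u) + \mathcal{Q}_k(u) + R_k(u),
\end{equation*}
where $\mathcal{L}_k$ is linear in $u$, $\mathcal{Q}_k$ is quadratic in $(u,\nabla u)$, and $|R_k(u)|\leqslant C\varepsilon\|u\|_{W^{1,2}(\mathbb{S}^n)}^2$. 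The analogous expansion of $\mathscr{A}_j(\Omega)$ produces $\mathcal{L}_j, \mathcal{Q}_j, R_j$.

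Since both $\mathcal{L}_k$ and $\mathcal{L}_j$ depend only on the zero spherical-harmonic mode $a_0$ of $u=\sum_m a_m Y_m$, the constraint $\mathscr{A}_j(\Omega)=\mathscr{A}_j(B)$ lets me eliminate $\mathcal{L}_k(u) = -\mu(\mathcal{Q}_j(u)+R_j(u))$ for the unique proportionality constant $\mu$ between the two linear functionals, thereby absorbing the first-order contribution into a quadratic error. The barycenter condition $\mathrm{bar}(\Omega)=O$ together with the volume-type constraint yields $a_0^2,a_1^2 = O(\varepsilon)\|u\|_{L^2(\mathbb{S}^n)}^2$ as recorded in \eqref{a1epwilon}, so the modes $m=0,1$ contribute only to the error.

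Decomposing $u=\sum_m a_m Y_m$, the reduced quadratic form $\mathcal{Q}_k(u)-\mu\,\mathcal{Q}_j(u)$ diagonalizes with eigenvalues that are explicit polynomials in the Laplace eigenvalue $\lambda_m = m(m+n-1)$ of $\mathbb{S}^n$. A direct check shows these eigenvalues are minimized at $m=2$ among $m\geqslant 2$, which (after the usual normalization of the spherical harmonics) yields
\begin{equation*}
\int_M\Phi\sigma_k(\kappa)\,\mathrm{d}\mu_g - \int_{\partial B}\Phi\sigma_k(\kappa)\,\mathrm{d}\mu_g \geqslant \frac{(n-k+2)(k-j)+2k-2}{4}\int_{\partial B}\Phi\sigma_k(\kappa)\,\mathrm{d}\mu_g \cdot \|u\|_{L^2(\mathbb{S}^n)}^2 + O(\varepsilon)\|u\|_{W^{1,2}}^2.
\end{equation*}

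To convert $\|u\|_{L^2(\mathbb{S}^n)}^2$ into the squared Fraenkel asymmetry, I would use the Euclidean estimate $\overline{\alpha}(\Omega)\leqslant (n+1+O(\varepsilon))\|u\|_{L^1(\mathbb{S}^n)}/\mathrm{Vol}(B)$ (already implicit in \cite{VanBlargan2022QuantitativeQI}) together with Cauchy--Schwarz to obtain $\overline{\alpha}^2(\Omega)\leqslant \dfrac{(n+1)^2}{n}(1+O(\varepsilon))\|u\|_{L^2(\mathbb{S}^n)}^2$, which produces the factor $(n+1)^2$ in the denominator of the theorem's coefficient and closes the argument. The main technical obstacle is the first step: performing the explicit second-order expansion of $\int_M\Phi\sigma_k(\kappa)\,\mathrm{d}\mu_g$ and isolating the quadratic form $\mathcal{Q}_k$. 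In contrast with \eqref{aklowebound}, the weight $\Phi(1+u)$ contributes additional linear and quadratic pieces that must be combined with those from $\sigma_k$ and $\mathrm{d}\mu_g$, and extracting the sharp coefficient $(n-k+2)(k-j)+2k-2$ requires careful bookkeeping through the combinatorial identities for elementary symmetric polynomials applied to the curvatures of a nearly spherical radial graph.
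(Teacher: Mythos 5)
Your overall architecture coincides with the paper's: expand $\int_M\Phi\sigma_k\,\mathrm{d}\mu_g$ to second order in $(u,\nabla u)$, use $\mathscr{A}_j(\Omega)=\mathscr{A}_j(B)$ to trade the linear term $\int_{\mathbb{S}^n}u\,\mathrm{d}A$ for quadratic ones (this is exactly the substitution of \eqref{a-1inR}--\eqref{ajinR} into \eqref{PhiinR}), and use $\mathrm{bar}(\Omega)=O$ to suppress the spherical-harmonic modes $m=0,1$. The genuine gap is in your last step, where you collapse the entire reduced quadratic form onto $\|u\|_{L^2(\mathbb{S}^n)}^2$ via its minimal eigenvalue at $m=2$ and then pass to the asymmetry through $\overline{\alpha}\lesssim\|u\|_{L^1}\lesssim\|u\|_{L^2}$. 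First, your constants are not consistent with the normalization in \eqref{alphaomegaeuclidean}: since $\mathrm{Vol}(B)=\mathrm{Area}(\mathbb{S}^n)/(n+1)$, Cauchy--Schwarz gives $\overline{\alpha}^2(\Omega)\leqslant\frac{(n+1)^2}{\mathrm{Area}(\mathbb{S}^n)}(1+O(\varepsilon))\|u\|_{L^2(\mathbb{S}^n)}^2$ rather than $\frac{(n+1)^2}{n}\|u\|_{L^2}^2$, and correspondingly your intermediate lower bound, which carries the factor $\int_{\partial B}\Phi\sigma_k\,\mathrm{d}\mu_g=\tfrac12\binom{n}{k}\mathrm{Area}(\mathbb{S}^n)$ in front of $\|u\|_{L^2}^2$, is false as written.

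More importantly, even after fixing the normalizations the route through $\|u\|_{L^2}^2$ alone cannot reach the stated constant for all $n,k,j$. Take $k=1$, $j=-1$: the quadratic form \eqref{phisigmak} reduces to $\binom{n}{1}\frac{n+1}{2n}\|\nabla u\|_{L^2}^2$ (the $u^2$-coefficient vanishes), so the best $L^2$ bound from the mode-$2$ eigenvalue is $\binom{n}{1}\frac{(n+1)^2}{n}\|u\|_{L^2}^2$, and chaining with $\overline{\alpha}^2\leqslant\frac{(n+1)^2}{\mathrm{Area}(\mathbb{S}^n)}\|u\|_{L^2}^2$ yields the normalized coefficient $\frac{2}{n}$, whereas the theorem asserts $\frac{n}{2(n+1)}$; one has $\frac{2}{n}<\frac{n}{2(n+1)}$ as soon as $n\geqslant 5$. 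The paper avoids this loss by keeping the bound in terms of $\|\nabla u\|_{L^2}^2$: the Poincar\'e-type estimate \eqref{egenest}/\eqref{egenest22} is used only to absorb the possibly negative $\|u\|_{L^2}^2$-coefficient (spending half of the gradient term), and the conversion to asymmetry is done with the sharper Poincar\'e-based estimate $\overline{\alpha}^2(\Omega)\leqslant\frac{(n+1)^2}{n^2\mathrm{Area}(\mathbb{S}^n)}\|\nabla u\|_{L^2}^2+O(\varepsilon)\|\nabla u\|_{L^2}^2$ from Lemma 5.3 of \cite{VanBlargan2022QuantitativeQI}, which is precisely what produces the factor $\frac{n}{(n+1)^2}$ in \eqref{weiinR}. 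You need to replace your final conversion with this step; the rest of your outline is sound.
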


We also derive the stability of inequalities (\ref{weightedH}) and (\ref{weightedH1}) for nearly spherical sets in $\mathbb{H}^{n+1}$.
\begin{thm}
	\label{mainresult3}
	Let $M=\{(\rho(1+u(x)),x):x\in\mathbb{S}^n\}$ be a nearly spherical set in $\mathbb{H}^{n+1}$, where $u\in C^3(\mathbb{S}^n)$. $\Omega\subset\mathbb{H}^{n+1}$ is a bounded subset which is star-shaped with respect to $O$ and is enclosed by $M$. Suppose $0\leqslant k\leqslant n-1$, $-1\leqslant j<k$, if both of the following hold
	\begin{enumerate}[label={(\arabic*)},parsep=0cm,itemsep=0cm,topsep=0cm]
		\item $\mathscr{A}_j(\Omega) = \mathscr{A}_j(\overline{B}_{\rho})$,
		\item $\mathrm{bar}(\Omega)=O$,
	\end{enumerate}
	then for any $\eta>0$, there exists $\varepsilon>0$, such that when $\|u\|_{W^{2,\infty}(\mathbb{S}^n)}<\varepsilon$ holds, we have
	\begin{eqnarray}
		&&\displaystyle\int_M \Phi\sigma_k(\kappa) \mathrm{d}\mu_g - \displaystyle\int_{\partial\overline{B}_{\rho}} \Phi\sigma_k(\kappa) \mathrm{d}\mu_g \notag\\
		&\geqslant& \left(\dfrac{n(n-k)(k-j)}{4\mathrm{Area}(\mathbb{S}^n)}{n\choose k}\dfrac{\phi'^{k-2}(\rho)\Phi(\rho)}{\phi^{n+k+2}(\rho)} -\eta \right)\alpha^2(\Omega),   \label{hqq}
	\end{eqnarray}
    and
	\begin{eqnarray}
		&&\int_M \phi'\sigma_k(\kappa) \mathrm{d}\mu_g - \int_{\partial \overline{B}_{\rho}} \phi'\sigma_k(\kappa) \mathrm{d}\mu_g \notag \\
		&\geqslant& \left( \dfrac{n(n-k)(k-j)}{4\mathrm{Area}(\mathbb{S}^n)}{n\choose k} \dfrac{\phi'^{k+1}(\rho)}{\phi^{n+k+2}(\rho)} -\eta \right)\alpha^2(\Omega), \label{hqq2}
	\end{eqnarray}
	where $\Phi(r)$ and $\phi(r)$ are defined in \eqref{eq-phi} and \eqref{Phi}.
\end{thm}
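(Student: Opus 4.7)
The plan is to follow the template of Theorem \ref{mainresult} (and its Euclidean weighted counterpart Theorem \ref{mainresult2}), suitably adapted to handle the two radial weights $\Phi(r)$ and $\phi'(r)$. The whole argument is driven by a second-order Taylor expansion of the weighted integrals $\int_M \Phi\sigma_k(\kappa)\,d\mu_g$ and $\int_M \phi'\sigma_k(\kappa)\,d\mu_g$ in the defining function $u$ of $M$, combined with the spherical-harmonic decomposition $u=\sum_{m\geq 0} a_m Y_m$ and the two hypotheses of the theorem.

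First I would write $M$ as the graph $r=\rho(1+u(x))$ in polar coordinates on $\mathbb{H}^{n+1}$, whose warped-product metric is $dr^2+\phi^2(r)\,g_{\mathbb{S}^n}$ with $\phi=\sinh$. Using the formulas for the induced metric, second fundamental form and principal curvatures $\kappa_i$ already established in the proof of Theorem \ref{mainresult}, I would expand each building block --- $\sigma_k(\kappa)$, the area form $d\mu_g$, and the weight $\Psi\in\{\Phi,\phi'\}$ evaluated at $r=\rho(1+u)$ --- to second order in $u$ and $\nabla u$. Multiplying these expansions and integrating over $\mathbb{S}^n$ gives, for each weight $\Psi$,
\[
\int_M \Psi\,\sigma_k(\kappa)\,d\mu_g=\int_{\partial\overline{B}_\rho}\Psi\sigma_k(\kappa)\,d\mu_g+L_\Psi[u]+Q_\Psi[u]+o(\|u\|_{W^{2,\infty}}^2),
\]
where $L_\Psi$ is a linear functional of $u$ and $Q_\Psi$ is a quadratic form that diagonalizes in spherical harmonics.

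Next I would impose the two hypotheses. The constraint $\mathscr{A}_j(\Omega)=\mathscr{A}_j(\overline{B}_\rho)$, expanded as in the proof of Theorem \ref{mainresult}, determines $a_0$ in terms of the higher Fourier modes up to an $O(\varepsilon)$ correction and eliminates the linear functional $L_\Psi[u]$ to leading order. The barycenter condition $\mathrm{bar}(\Omega)=O$ forces $a_1^2=O(\varepsilon)\|u\|_{L^2(\mathbb{S}^n)}^2$, exactly as in \eqref{a1epwilon}. After substituting these reductions into $Q_\Psi$, what remains is a positive quadratic form of the shape $\sum_{m\geq 2}\bigl(\mathcal{A}_m^{\Psi}+\lambda_m\mathcal{B}_m^{\Psi}\bigr)a_m^2$, where $\lambda_m=m(m+n-1)$ are the eigenvalues of $-\Delta_{\mathbb{S}^n}$. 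A direct computation of $\mathcal{A}_m^{\Psi}$ and $\mathcal{B}_m^{\Psi}$ in terms of $\phi(\rho),\phi'(\rho)$ and $\Phi(\rho)$ shows that each combination $\mathcal{A}_m^{\Psi}+\lambda_m\mathcal{B}_m^{\Psi}$ is bounded below by the stated leading coefficient times $(1+\lambda_m)$, giving a lower bound proportional to $\|u\|_{W^{1,2}(\mathbb{S}^n)}^2$. The same Fuglede-type estimate relating $\|u\|_{W^{1,2}(\mathbb{S}^n)}^2$ to $\alpha(\Omega)$ that closes Theorem \ref{mainresult} then closes the argument here.

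The main obstacle is the algebraic identification of the sharp leading coefficients. Because $\Phi$ and $\phi'$ are positive bounded functions of $r$, no new analytic ingredient is required beyond those already used for Theorem \ref{mainresult}; however, the interaction between the Taylor expansion of $\Psi(\rho(1+u))$ and that of $\sigma_k(\kappa)\,d\mu_g$ produces many cross terms, and one must use the identities $\phi''=\phi$ in $\mathbb{H}^{n+1}$ and $\Phi'=\phi$ to collapse them into the sharp factors $\frac{n(n-k)(k-j)}{4\mathrm{Area}(\mathbb{S}^n)}\binom{n}{k}\frac{\phi'^{k-2}(\rho)\Phi(\rho)}{\phi^{n+k+2}(\rho)}$ in \eqref{hqq} and $\frac{n(n-k)(k-j)}{4\mathrm{Area}(\mathbb{S}^n)}\binom{n}{k}\frac{\phi'^{k+1}(\rho)}{\phi^{n+k+2}(\rho)}$ in \eqref{hqq2}. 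This bookkeeping is the bulk of the technical work, but runs in close parallel with the Euclidean weighted computation carried out for Theorem \ref{mainresult2}, so the remaining challenge is careful algebra rather than any new analytic device.
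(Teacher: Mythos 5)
Your proposal follows essentially the same route as the paper: second-order expansion of $\int_M\Psi\sigma_k(\kappa)\,\mathrm{d}\mu_g$ in $u$, elimination of the linear term via the constraint $\mathscr{A}_j(\Omega)=\mathscr{A}_j(\overline{B}_\rho)$, the Poincar\'e-type estimate from $\mathrm{bar}(\Omega)=O$ (your mode-by-mode form $\sum_m(\mathcal{A}_m^\Psi+\lambda_m\mathcal{B}_m^\Psi)a_m^2$ is just the paper's $C_1\|u\|^2+C_2\|\nabla u\|^2$ written in Fourier modes), and the Fraenkel-asymmetry bound on $\|\nabla u\|_{L^2}^2$. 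The one point you underplay is that the positivity of the $\|\nabla u\|^2$-coefficient and of $C_1+nC_2$ is not routine bookkeeping but relies on the hyperbolic facts $\phi'(\rho)=\cosh\rho\geqslant 1$ and $\Phi=\phi'-1$ --- this is precisely what the authors cannot verify in $\mathbb{S}^{n+1}$, which is why the theorem is restricted to $\mathbb{H}^{n+1}$.
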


We should mention that the curvature flow methods could be used to establish the quantitative quermassintegral inequalities in $\mathbb{N}^{n+1}(K)$. Using the inverse curvature flow, VanBlargan and Wang \cite{VanBlargan2022StabilityOQ} managed to give the inequality (\ref{aklowebound}) a new proof; Scheuer \cite{Scheuer2023Stability} established a quantitative quermassintegral inequality for closed and star-shaped $C^2$-hypersurfaces in $\mathbb{N}^{n+1}(K)$. For the study of other weighted geometric inequalities in quantitative form, we refer to \cite{FUSCO2023109946,Gavitone}.

The paper is organized as follows. In Section \ref{section2}, we present some preliminaries for nearly spherical sets in space forms and some useful identities about symmetric polynomials. In Section \ref{section3}, we derive an explicit expression for $\delta_{k,j}(\Omega)$ and weighted curvature integrals $\int_M \Phi\sigma_k(\kappa)\mathrm{d}\mu_g\ (0\leqslant k\leqslant n)$ and $\int_M \phi'\sigma_k(\kappa)\mathrm{d}\mu_g\ (0\leqslant k\leqslant n)$ under the condition $\|u\|_{W^{2,\infty}(\mathbb{S}^n)}<\varepsilon$. In Section \ref{section4}, we study the quantitative quermassintegral inequality and prove the first main result Theorem \ref{mainresult}. In Section \ref{section5}, we discuss the weighted quermassintegral inequalities and prove Theorem \ref{mainresult2} and Theorem \ref{mainresult3}.

\section{Preliminaries}
\label{section2}
Here we collect some properties of nearly spherical sets parametrized by radial function, elementary symmetric polynomials and quermassintegrals for nearly spherical sets.

\subsection{Nearly spherical sets in space forms}
Let $(\mathbb{N}^{n+1}(K),\overline{g})\ (n\geqslant 2)$ be a space form of dimension $n+1$ with sectional curvature $K=0,1,-1$. Its warped product metric is $\overline{g} = \mathrm{d}r^2 + \phi^2(r)g_{\mathbb{S}^n}$, where $r$ is the radial distance to the origin and
\begin{equation}\label{eq-phi}
	\phi(r) = \left\lbrace 
	\begin{array}{lll}
		r,&r\in[0,+\infty) ,&K=0,\\
		\sin r,&r\in[0,\dfrac{\pi}{2}) ,& K=1,\\
		\sinh r,&r\in[0,+\infty) ,&K=-1.
	\end{array}
	\right. 
\end{equation}
Then we have
\begin{equation}
	(\phi')^2+K\phi^2 = 1,\ \phi''=-K\phi.  \label{hd}
\end{equation}
Define the radial function $\Phi(r)=\displaystyle\int_0^{r} \phi(s) \mathrm{d}s$ on $\mathbb{N}^{n+1}(K)$, that is
\begin{equation}\label{Phi}
	\Phi(r) = \left\lbrace 
	\begin{array}{lll}
		\dfrac{1}{2}r^2,&r\in[0,+\infty) ,&K=0,\\
		1-\cos r,&r\in[0,\dfrac{\pi}{2}) ,& K=1,\\
		\cosh r-1,&r\in[0,+\infty) ,&K=-1.
	\end{array}
	\right. 
\end{equation}
It's well known that $V=\nabla^{\mathbb{N}^{n+1}(K)}\Phi(r)=\phi(r)\dfrac{\partial}{\partial r}$ is a conformal Killing vector field. We refer to \cite{Guan2013AMC} for more details.

Let $\Omega$ be a bounded domain in $\mathbb{N}^{n+1}(K)\ (K=-1,1)$ which is star-shaped respect to the origin $O$ and is enclosed by $M$. Suppose that $M=\{(\rho(1+u(x)),x):x\in\mathbb{S}^n\}$, where $u:\mathbb{S}^n\to (-1,+\infty)$ is a $C^3$ function satisfying $\|u\|_{W^{2,\infty}(\mathbb{S}^n)}<\varepsilon$. 

We present some geometric quantities for the hypersurface $M$ under this radial parametrization. In a geodesic polar coordinate of $\mathbb{N}^{n+1}(K)$, denote $\left\{\frac{\partial}{\partial\theta_1},\frac{\partial}{\partial\theta_2},\cdots,\frac{\partial}{\partial\theta_n},\frac{\partial}{\partial r}\right\}$ as the tangent basis and $s_{ij}$ as the canonical metric on the unit sphere\ $\mathbb{S}^n$. Then
\begin{equation}
	\langle \frac{\partial}{\partial\theta_i},\frac{\partial}{\partial r} \rangle = 0,\ \ \langle \frac{\partial}{\partial r},\frac{\partial}{\partial r} \rangle = 1, \ \ \langle \frac{\partial}{\partial\theta_i},\frac{\partial}{\partial \theta_j} \rangle = \phi^2(r) s_{ij}.
\end{equation}
We also refer\ $u_i=\dfrac{\partial u}{\partial \theta_i}$, then\ $\{e_i=\dfrac{\partial}{\partial\theta_i}+\rho u_i\dfrac{\partial}{\partial r}:i=1,2,\cdots,n\}$ form a tangent basis of $M$. Let\ $g_{ij}$, $g^{ij}$, $N$, $h_{ij}$ denote the induced metric, the inverse metric matrix, the outward unit normal vector and the second fundamental form corresponding to $M$ respectively. For convenience, we denote
\begin{equation}\label{phi}
	\phi=\phi(r),\ \phi'=\phi'(r),\ r=\rho(1+u),
\end{equation}
and 
\begin{equation}\label{D}
	D=\sqrt{\phi^2+\rho^2|\nabla u|^2},
\end{equation}
where $\nabla$ is the Levi-Civita connection on $\mathbb{S}^n$. Then the induced metric of $M$ is
\begin{equation}
	g_{ij} =\rho^2 u_iu_j+\phi^2 s_{ij}.
\end{equation}
Thus the area element $\mathrm{d}\mu_g$ corresponding to the induced metric $g$ is
\begin{equation}\label{area-elem}
	\mathrm{d}\mu_g=\sqrt{\mathrm{det}(g_{ij})}\mathrm{d}A = \phi^{n-1} D\mathrm{d}A,\\
\end{equation}
the inverse of $(g_{ij})$ is
\begin{equation}
	g^{ij} = \dfrac{s^{ij}}{\phi^2}-\dfrac{1}{\phi^2}\cdot\dfrac{\rho^2 u_ku_ls^{ik}s^{jl}}{D^2},
\end{equation}
and the outward unit normal vector corresponding to $M$ is
\begin{equation}
	N = \dfrac{-\rho s^{ij}u_i\frac{\partial}{\partial\theta_j}+\phi^2\frac{\partial}{\partial r}}{\phi D}.
\end{equation}
Note that $h_{ij}=-\langle \nabla^{\mathbb{N}^{n+1}(K)}_{e_i}e_j, N\rangle$, the second fundamental form of $M$ is
\begin{equation}
	h_{ij} = \dfrac{1}{D}\left[ 2\phi'\rho^2 u_iu_j+\phi^2\phi's_{ij} - \phi \rho u_{ij} \right],
\end{equation}
then the weingarten tensor $h^i_j=g^{ik}h_{kj}$ is
\begin{equation}
	h^i_j = \dfrac{\phi'\delta^i_j}{D} - \dfrac{\rho u^i_j}{D\phi} + \dfrac{\phi'\rho^2 u^iu_j}{D^3} + \dfrac{\rho^3 u^iu_k u^k_j}{D^3\phi},
\end{equation}
where $|\nabla u|^2 = s^{ij}u_iu_j$. We also refer $u^i=s^{ij}u_j$, $u^k_j=s^{ki}u_{ij}$.


\subsection{Properties of elementary symmetric functions}
\label{propertyofelement}
Here we present some properties of elementary symmetric polynomials (see e.g., in \cite{Guan2014}).

Let $\lambda=(\lambda_1,\lambda_2,\cdots,\lambda_n)\in\mathbb{R}^n$, we denote
\begin{equation}
	\sigma_k(\lambda) = \sum\limits_{1\leqslant i_1<i_2<\cdots<i_k\leqslant n} \lambda_{i_1}\lambda_{i_2}\cdots\lambda_{i_k}
\end{equation}
as the $k$th elementary symmetric polynomials of $\lambda\in\mathbb{R}^n$ when $1\leqslant k\leqslant n$ and have the convention that $\sigma_0(\lambda)=1$. Generally, let $A=(A^i_j)_{n\times n}$ be a symmetric matrix, then for all $1\leqslant k\leqslant n$,
\begin{equation}
	\sigma_k(A)= \dfrac{1}{k!}\delta_{i_1i_2\cdots i_k}^{j_1j_2\cdots j_k}A^{i_1}_{j_1}A^{i_2}_{j_2}\cdots A^{i_k}_{j_k}.  \label{ska}
\end{equation}
Besides, we set $\sigma_0(A)=1$.

Let $A_1,\cdots,A_k$ be $n\times n$ symmetric matrices, the $k$th Newton operator $(1\leqslant k\leqslant n)$ for them is defined as
\begin{equation}
	[T_k]_i^j(A_1,A_2,\cdots,A_k)=\dfrac{1}{k!}\delta_{ii_1i_2\cdots i_k}^{jj_1j_2\cdots j_k}(A_1)^{i_1}_{j_1}(A_2)^{i_2}_{j_2}\cdots (A_k)^{i_k}_{j_k}.
\end{equation}
We also have the convention that $[T_0]_i^j =\delta_i^j$. Particularly, when $A_1=A_2=\cdots=A_k=A$, we briefly denote
\begin{equation}
	[T_k]_i^j(A) = [T_k]_i^j (\underbrace{A,A,\cdots,A}_k).
\end{equation}

Note that $[T_k]_i^j(A) = \dfrac{\partial \sigma_k(A)}{\partial A^i_j}$, it is well-known that 
\begin{equation}\label{newton}
	A^j_s [T_m]^i_j(A) = \delta^i_s\sigma_{m+1}(A)-[T_{m+1}]^i_s(A).
\end{equation}	

By using the anti-symmetry property of Kronecker-Delta $\delta_{i_1i_2\cdots i_k}^{j_1j_2\cdots j_k}$, we have the following Lemma:

\begin{lem}[\cite{VanBlargan2022QuantitativeQI}, Lemma 3.2]
	Suppose that $w,v_1,v_2$ are column vectors in $\mathbb{R}^n$, then there holds
	\begin{equation}
		\dfrac{1}{(k-1)!}\delta_{i_1i_2\cdots i_k}^{j_1j_2\cdots j_k} (wv_1^t)^{i_1}_{j_1}(wv_2^t)^{i_2}_{j_2}(A_1)^{i_3}_{j_3}\cdots (A_{k-2})^{i_k}_{j_k} = 0. \label{Sigmak}
	\end{equation}
	
\end{lem}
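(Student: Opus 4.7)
The plan is to exploit the antisymmetry of the generalized Kronecker delta against the symmetry produced by the two rank-one matrices sharing the same left factor $w$.

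First, I would expand the two rank-one tensors componentwise: $(wv_1^t)^{i_1}_{j_1} = w^{i_1}(v_1)_{j_1}$ and $(wv_2^t)^{i_2}_{j_2} = w^{i_2}(v_2)_{j_2}$. Substituting these into the left-hand side of \eqref{Sigmak} yields
\begin{equation*}
\frac{1}{(k-1)!}\,\delta^{j_1 j_2 \cdots j_k}_{i_1 i_2 \cdots i_k}\, w^{i_1} w^{i_2} (v_1)_{j_1}(v_2)_{j_2}(A_1)^{i_3}_{j_3}\cdots (A_{k-2})^{i_k}_{j_k}.
\end{equation*}

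Next, I would split the sum according to the pair of indices $(i_1,i_2)$ and observe that the tensor $w^{i_1} w^{i_2}$ is symmetric under the transposition $i_1 \leftrightarrow i_2$, while the generalized Kronecker delta $\delta^{j_1 j_2 \cdots j_k}_{i_1 i_2 \cdots i_k}$ is totally antisymmetric in its lower indices and in particular changes sign when $i_1$ and $i_2$ are swapped. Since the remaining factors $(v_1)_{j_1}(v_2)_{j_2}(A_1)^{i_3}_{j_3}\cdots (A_{k-2})^{i_k}_{j_k}$ do not involve $i_1$ or $i_2$ in a way that is altered by this transposition, pairing each term with its image under $i_1 \leftrightarrow i_2$ gives cancellation, so the entire contraction vanishes.

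There is essentially no obstacle here; the only thing to be mindful of is the bookkeeping of indices and the fact that the same antisymmetry argument works regardless of whether we think of $w^{i_1}w^{i_2}$ as symmetric through an explicit averaging (replacing it by $\tfrac{1}{2}(w^{i_1}w^{i_2}+w^{i_2}w^{i_1})$) or directly via the skew pairing. I would phrase the argument in the latter form since it is the most economical. Once the identity is established in this form, it is clear why it is useful in the sequel: when computing variations of $\sigma_k$ at the sphere, several of the perturbation terms in the Weingarten tensor naturally appear as rank-one contributions of the form $w v^t$ with a common $w$ (typically arising from the gradient $\nabla u$ combined with derivatives of the normal), and the lemma guarantees that all such duplicated contributions drop out of the expansion.
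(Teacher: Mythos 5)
Your proof is correct and is exactly the argument the paper has in mind: the paper states this lemma without proof, citing \cite{VanBlargan2022QuantitativeQI} and remarking only that it follows from the antisymmetry of the generalized Kronecker delta, which is precisely the symmetry-versus-antisymmetry cancellation you carry out (the contraction equals its own negative after relabelling the dummy indices $i_1\leftrightarrow i_2$, since $w^{i_1}w^{i_2}$ is symmetric while $\delta_{i_1i_2\cdots i_k}^{j_1j_2\cdots j_k}$ is antisymmetric and the remaining factors do not involve $i_1,i_2$). No gaps; your write-up simply makes explicit what the paper leaves as a one-line remark.
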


\subsection{Quermassintegrals and weighted curvature integrals of nearly spherical sets}
\label{quermass-subsec}

In a space form $\mathbb{N}^{n+1}(K)$, the quermassintegrals of a compact convex domain $\Omega$ are defined as (see \cite{G-Solanes05}):
\begin{equation}
	\mathscr{A}_k(\Omega)=(n-k){n\choose k}\frac{\omega_k\cdots\omega_0}{\omega_{n-1}\cdots\omega_{n-k-1}}\int_{\mathcal{L}_{k+1}} \chi(L_{k+1}\cap\Omega)\mathrm{d}L_{k+1}
\end{equation}
for $k=0,1,\cdots,n-1$, where $\omega_k=|\mathbb{S}^k|$ denotes the area of $k$-dimensional unit sphere, $\mathcal{L}_{k+1}$ is the space of $(k+1)$-dimensional totally geodesic subspaces $L_{k+1}$ in $\mathbb{N}^{n+1}(K)$, the funtion $\chi$ is defined to be $1$ if $L_{k+1}\cap\Omega\neq\varnothing$ and to be $0$ otherwise. In particular, we have 
\begin{equation}
	\mathscr{A}_{-1}(\Omega)=\mathrm{Vol}(\Omega),\ \mathscr{A}_{0}(\Omega)=|\partial\Omega|,\ \mathscr{A}_{n}(\Omega)=\frac{\omega_n}{n+1}.
\end{equation}
For a domain $\Omega$ with $C^2$ boundary, we denote\ $\kappa=(\kappa_1,\kappa_2,\cdots,\kappa_n)$ as the principal curvature vector of the hypersurface $M$. Let $\mathrm{d}\mu_g$ the area element of $M$, $\mathrm{d}A$ the area element of\ $\mathbb{S}^n$, then the quermassintegrals can be defined (see \cite{Li2021IsoperimetricTI}) and calculated as follows:
\begin{eqnarray}
	\mathscr{A}_{-1}(\Omega) &=& \mathrm{Vol}(\Omega) = \displaystyle\int_{\mathbb{S}^n} \left(\displaystyle\int_0^{\rho(1+u)} \phi^n(r) \mathrm{d}r \right) \mathrm{d}A, \label{a-1omega}\\
	\mathscr{A}_0(\Omega)&=& \displaystyle\int_M 1 \mathrm{d}\mu_g = \displaystyle\int_{\mathbb{S}^n}\phi^{n-1}(\rho(1+u)) \sqrt{\phi^2(\rho(1+u))+\rho^2|\nabla u|^2} \mathrm{d}A, \label{a0omega}\\
	\mathscr{A}_1(\Omega) &=& \displaystyle\int_M \sigma_1(\kappa) \mathrm{d}\mu_g + K n\mathrm{Vol}(\Omega),\label{a1omega}\\
	\mathscr{A}_k(\Omega) &=& \displaystyle\int_M \sigma_k(\kappa) \mathrm{d}\mu_g + K \dfrac{n-k+1}{k-1}\mathscr{A}_{k-2}(\Omega)\ (2\leqslant k\leqslant n).  \label{akomega}
\end{eqnarray}


The weighted curvature integrals for $\Omega$ enclosed by a nearly spherical set $M=\{(\rho(1+u(x)),x):x\in\mathbb{S}^n\}$ is defined as $\displaystyle\int_{M} \Psi(r) \sigma_k(\kappa)  \mathrm{d}\mu_g\ (0\leqslant k\leqslant n)$, where $\Psi$ equals $\Phi\ \mbox{or}\ \phi'$ are defined in (\ref{eq-phi}) and (\ref{Phi}). 

\section{Derive the $(k,m)$-isoperimetric deficit}
\label{section3}
In this section, we aim to derive the specific expression for $\mathscr{A}_k(\Omega)-\mathscr{A}_k(\overline{B}_{\rho})\ (-1\leqslant k\leqslant n)$ in terms of $u$ and its derivatives. Firstly, we use (\ref{ska}) to calculate $\sigma_k(\kappa)\ (1\leqslant k\leqslant n)$ of $M$. Then, under the condition $\|u\|_{W^{2,\infty}(\mathbb{S}^n)}<\varepsilon$, we will compute the curvature integrals $\displaystyle\int_M \sigma_k(\kappa)\mathrm{d}\mu_g\ (0\leqslant k\leqslant n)$ and the weighted curvature integrals $\displaystyle\int_M \Psi\sigma_k(\kappa)\mathrm{d}\mu_g\ (0\leqslant k\leqslant n)$. Finally, one can use iteration process to obtain the asymptotic expression for $\mathscr{A}_k(\Omega)-\mathscr{A}_k(\overline{B}_{\rho})\ (-1\leqslant k\leqslant n)$.

\subsection{Computation of $\sigma_k(\kappa)$}
Using the properties of elementary symmetric polynomials that have been discussed in Section \ref{propertyofelement}, we can calculate $\sigma_k(\kappa)\ (1\leqslant k\leqslant n)$ as follows:
\begin{lem}[Expression for $\sigma_k(\kappa)\ (1\leqslant k\leqslant n)$]
	Let\ $\Omega\subset\mathbb{N}^{n+1}(K)\ (K=-1,1)$, $M=\partial\Omega=\{(\rho(1+u(x)),x):x\in\mathbb{S}^n\}$, where $u\in C^2(\mathbb{S}^n)$. Denote 
	$$\phi:=\phi(\rho(1+u)),\ \phi':=\phi'(\rho(1+u)),\ D=\sqrt{\phi^2+\rho^2|\nabla u|^2},$$ then for any\ $1\leqslant k\leqslant n$, there holds
	\begin{equation}
		\sigma_k(\kappa) = \sum\limits_{m=0}^k \dfrac{(-1)^m \phi'^{k-m}}{D^{k+2}\phi^m }{n-m \choose k-m} \rho^m \left[ \phi^2\sigma_m(D^2u) + \dfrac{k+n-2m}{n-m} \rho^2 u^iu_j[T_m]_i^j(D^2 u) \right].   \label{sigmak}
	\end{equation}
\end{lem}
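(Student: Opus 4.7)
The approach is to start from the explicit Weingarten tensor given in the preliminaries,
\begin{equation*}
h^i_j \;=\; \frac{\phi'}{D}\delta^i_j \;-\; \frac{\rho}{D\phi}(D^2u)^i_j \;+\; \frac{\phi'\rho^2}{D^3}u^i u_j \;+\; \frac{\rho^3}{D^3\phi}u^i(u_k u^k_j),
\end{equation*}
and decompose it into four pieces: the scalar multiple $\tfrac{\phi'}{D}I$, the Hessian piece $A=D^2u$, and two rank-one tensors $B_1=\tfrac{\phi'\rho^2}{D^3}u^iu_j$ and $B_2=\tfrac{\rho^3}{D^3\phi}u^i(u_ku^k_j)$ that share the common column vector $u^i$. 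Applying the standard expansion
\begin{equation*}
\sigma_k\!\left(\tfrac{\phi'}{D}I + M\right)=\sum_{m=0}^k\binom{n-m}{k-m}\!\left(\tfrac{\phi'}{D}\right)^{k-m}\sigma_m(M),\qquad M:=-\tfrac{\rho}{D\phi}A+B_1+B_2,
\end{equation*}
reduces the task to computing $\sigma_m(M)=\tfrac{1}{m!}\delta^{j_1\cdots j_m}_{i_1\cdots i_m}M^{i_1}_{j_1}\cdots M^{i_m}_{j_m}$ for each $0\le m\le k$, which I would expand multinomially.

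The crucial structural observation is that any term in the expansion containing two or more factors drawn from $\{B_1,B_2\}$ vanishes by \eqref{Sigmak}, since $B_1,B_2$ share the common column vector $u^i$ and the generalized Kronecker delta is antisymmetric in the upper indices. Thus only three sorts of terms survive in $\sigma_m(M)$: (a) the pure Hessian contribution $(-\rho/(D\phi))^m\sigma_m(D^2u)$; (b) a single-$B_1$ cross term that pulls out cleanly as a multiple of $u^iu_j[T_{m-1}]^j_i(D^2u)$; and (c) a single-$B_2$ cross term that leaves behind the contraction $u^iu_k(D^2u)^k_j[T_{m-1}]^j_i(D^2u)$. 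For (c) the one nontrivial algebraic step is to apply Newton's identity \eqref{newton} with $A=D^2u$, which gives
\begin{equation*}
u^iu_k(D^2u)^k_j[T_{m-1}]^j_i(D^2u)=|\nabla u|^2\sigma_m(D^2u)-u^iu_j[T_m]^j_i(D^2u),
\end{equation*}
converting the $B_2$-cross precisely into contributions of the two shapes appearing in the target formula.

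Finally, I would collect coefficients. The coefficient of $\sigma_m(D^2u)$ in $\sigma_k(h)$ is the sum of (a) and the $|\nabla u|^2\sigma_m$ piece of (c); the defining relation $D^2=\phi^2+\rho^2|\nabla u|^2$ produces the cancellation that leaves the claimed prefactor $(-1)^m\binom{n-m}{k-m}\phi'^{k-m}\phi^2\rho^m/(D^{k+2}\phi^m)$. The coefficient of $u^iu_j[T_m]^j_i(D^2u)$ combines (b) at outer index $m+1$ (which produces $[T_m]$ after shifting, with a sign flip from $(-1)^{m-1}$) with the $[T_m]$-piece of (c) at index $m$; the Pascal-type identity
\begin{equation*}
\binom{n-m-1}{k-m-1}+\binom{n-m}{k-m}=\binom{n-m}{k-m}\cdot\frac{k+n-2m}{n-m}
\end{equation*}
reproduces the claimed factor $(k+n-2m)/(n-m)$. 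The main obstacle is purely bookkeeping, namely carefully tracking the index shift between $\sigma_m(M)$ and $\sigma_k(h)$ together with the signs, rather than any conceptual step beyond the antisymmetry-vanishing and Newton's identity already used.
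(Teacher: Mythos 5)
Your proposal is correct and follows essentially the same route as the paper: both expand $\sigma_k$ of the four-term Weingarten tensor, use the antisymmetry lemma \eqref{Sigmak} to discard terms with two or more rank-one factors sharing the column vector $u^i$, and then apply Newton's identity \eqref{newton} to the $u^iu_k u^k_j[T_{m-1}]$ cross term so that the $|\nabla u|^2\sigma_m$ piece cancels via $D^2=\phi^2+\rho^2|\nabla u|^2$ and the remaining $[T_m]$ pieces combine through the same Pascal-type identity. The only difference is organizational (you peel off the $\tfrac{\phi'}{D}I$ part first rather than counting $\delta^i_j$ factors inside the full $k$-fold Kronecker delta), which does not change the computation.
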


\begin{proof}
	Note that
	\begin{eqnarray}
		\sigma_k(h^i_j) &=& \dfrac{1}{k!}\delta^{j_1j_2\cdots j_k}_{i_1i_2\cdots i_k}\left( \dfrac{\phi'\delta^{i_1}_{j_1}}{D} - \dfrac{\rho u^{i_1}_{j_1}}{D\phi} + \dfrac{\phi'\rho^2 u^{i_1}u_{j_1}}{D^3} + \dfrac{\rho^3 u^{i_1}u_p u^{p}_{j_1}}{D^3\phi}\right)\notag \\
		&&\cdots \left( \dfrac{\phi'\delta^{i_k}_{j_k}}{D} - \dfrac{\rho u^{i_k}_{j_k}}{D\phi} + \dfrac{\phi'\rho^2 u^{i_k}u_{j_k}}{D^3} + \dfrac{\rho^3 u^{i_k}u_p u^{p}_{j_k}}{D^3\phi}\right). \label{sk12}
	\end{eqnarray}
	
	By (\ref{Sigmak}), we know the term\ $\dfrac{\phi'\rho^2 u^iu_j}{D^3}$ or $\dfrac{\rho^3 u^iu_p u^p_j}{D^3\phi}$ occurs at most once in each sum. Thus we calculate (\ref{sk12}) in three cases.
	
	(1) $m\geqslant 0$ times $-\dfrac{\rho u^i_j}{D\phi}$, others are $\dfrac{\phi' \delta^i_j}{D}$. Note that
\begin{equation}
	\delta_{i_1i_2\cdots i_mi_{m+1}\cdots i_k}^{j_1j_2\cdots j_mj_{m+1}\cdots j_k}\delta^{i_{m+1}}_{j_{m+1}}\cdots\delta^{i_k}_{j_k} = \delta_{i_1i_2\cdots i_m}^{j_1j_2\cdots j_m}{n-m \choose k-m}(k-m)!,
\end{equation}
 the sum is
 \begin{equation}
 	\begin{aligned}
 			A_m=&{k\choose m}\dfrac{1}{k!}\delta_{i_1i_2\cdots i_k}^{j_1j_2\cdots j_k} \dfrac{-\rho u^{i_1}_{j_1}}{D\phi}\dfrac{-\rho u^{i_2}_{j_2}}{D\phi}\cdots \dfrac{-\rho u^{i_m}_{j_m}}{D\phi}\dfrac{\phi'\delta^{i_{m+1}}_{j_{m+1}}}{D}\cdots \dfrac{\phi'\delta^{i_{m+1}}_{j_{m+1}}}{D}\\
 			=&{k\choose m}\dfrac{(-1)^m\rho^m \phi'^{k-m}}{D^k\phi^m}\dfrac{1}{k!} \delta_{i_1i_2\cdots i_k}^{j_1j_2\cdots j_k} u^{i_1}_{j_1}\cdots u^{i_m}_{j_m}\delta^{i_{m+1}}_{j_{m+1}}\cdots\delta^{i_k}_{j_k}\\
 			=&{k\choose m}\dfrac{(-1)^m\rho^m \phi'^{k-m}}{D^k\phi^m}\dfrac{1}{k!} \delta_{i_1i_2\cdots i_m}^{j_1j_2\cdots j_m} u^{i_1}_{j_1}\cdots u^{i_m}_{j_m}{n-m \choose k-m}(k-m)!\\
 			=&\dfrac{(-1)^m\rho^m \phi'^{k-m}}{D^k\phi^m}{n-m\choose k-m}\sigma_m(D^2 u).
 	\end{aligned}
 \end{equation}
	
	(2) Once $\dfrac{\phi'\rho^2 u^iu_j}{D^3}$, $m\geqslant 0$ times $-\dfrac{\rho u^i_j}{D\phi}$, others are\ $\dfrac{\phi'\delta^i_j}{D}$. Note that
	\begin{equation}
		\delta_{i_1i_2\cdots i_{m+1}i_{m+2}\cdots i_k}^{j_1j_2\cdots j_{m+1}j_{m+2}\cdots j_k}\delta^{i_{m+2}}_{j_{m+2}}\cdots\delta^{i_k}_{j_k} = \delta_{i_1i_2\cdots i_{m+1}}^{i_1j_2\cdots j_{m+1}}{n-m-1\choose k-m-1} (k-m-1)!,  \label{nmidentity}
	\end{equation}
 the sum is
 \begin{equation}
 	\begin{aligned}
 		B_m=&k{k-1\choose m}\dfrac{1}{k!}\delta_{i_1i_2\cdots i_k}^{j_1j_2\cdots j_k}\dfrac{\phi'\rho^2 u^{i_1}u_{j_1}}{D^3}\dfrac{-\rho u^{i_2}_{j_2}}{D\phi}\cdots \dfrac{-\rho u^{i_{m+1}}_{j_{m+1}}}{D\phi}\dfrac{\phi'\delta^{i_{m+2}}_{j_{m+2}}}{D}\cdots \dfrac{\phi'\delta^{i_k}_{j_k}}{D}\\
 		=&{k-1\choose m}\dfrac{(-1)^m\rho^{m+2}\phi'^{k-m}}{D^{k+2}\phi^m}\dfrac{1}{(k-1)!}\delta_{i_1i_2\cdots i_k}^{j_1j_2\cdots j_k} u^{i_1}u_{j_1}u^{i_2}_{j_2}\cdots u^{i_{m+1}}_{j_{m+1}}\delta^{i_{m+2}}_{j_{m+2}}\cdots\delta^{i_k}_{j_k}\\
 		=&{k-1\choose m}\dfrac{(-1)^m\rho^{m+2}\phi'^{k-m}}{D^{k+2}\phi^m}\dfrac{1}{(k-1)!}\delta_{i_1i_2\cdots i_{m+1}}^{j_1j_2\cdots j_{m+1}} u^{i_1}u_{j_1}u^{i_2}_{j_2}\cdots u^{i_{m+1}}_{j_{m+1}}\\
 		&\cdot {n-m-1\choose k-m-1} (k-m-1)!\\
 		=&\dfrac{(-1)^m\rho^{m+2}\phi'^{k-m}}{D^{k+2}\phi^m}{n-m-1\choose k-m-1}u^iu_j[T_m]^j_i(D^2 u).
 	\end{aligned}
 \end{equation}
	
	(3) Once $\dfrac{\rho^3 u^iu_k u^k_j}{D^3\phi}$, $m\geqslant 0$ times $-\dfrac{\rho u^i_j}{D\phi}$, others are\ $\dfrac{\phi'\delta^i_j}{D}$. By using (\ref{nmidentity}) again, the sum is
	\begin{equation}
		\begin{aligned}
			C_m=&k{k-1\choose m}\dfrac{1}{k!}\delta_{i_1i_2\cdots i_k}^{j_1j_2\cdots j_k}\dfrac{\rho^3 u^{i_1}u_p u^p_{j_1}}{D^3\phi}\dfrac{-\rho u^{i_2}_{j_2}}{D\phi}\cdots \dfrac{-\rho u^{i_{m+1}}_{j_{m+1}}}{D\phi}\dfrac{\phi'\delta^{i_{m+2}}_{j_{m+2}}}{D}\cdots \dfrac{\phi'\delta^{i_k}_{j_k}}{D}\\
			=&{k-1\choose m}\dfrac{(-1)^m\rho^{m+3}\phi'^{k-m-1}}{D^{k+2}\phi^{m+1}}\dfrac{1}{(k-1)!}\delta_{i_1i_2\cdots i_k}^{j_1j_2\cdots j_k}u^{i_1}u_p u^p_{j_1}u^{i_2}_{j_2}\cdots u^{i_{m+1}}_{j_{m+1}}\delta^{i_{m+2}}_{j_{m+2}}\cdots\delta^{i_k}_{j_k}\\
			=&{k-1\choose m}\dfrac{(-1)^m\rho^{m+3}(\phi')^{k-m-1}}{D^{k+2}\phi^{m+1}}\dfrac{1}{(k-1)!}\delta_{i_1i_2\cdots i_{m+1}}^{j_1j_2\cdots j_{m+1}}u^{i_1}u_p u^p_{j_1}u^{i_2}_{j_2}\cdots u^{i_{m+1}}_{j_{m+1}}\\
			&\cdot {n-m-1\choose k-m-1}(k-m-1)!\\
			=&\dfrac{(-1)^m\rho^{m+3}\phi'^{k-m-1}}{D^{k+2}\phi^{m+1}}{n-m-1\choose k-m-1}u^iu_pu^p_j[T_m]_i^j(D^2 u).
		\end{aligned}
	\end{equation}
	
	(4) Other cases are all 0.
	
	Hence
	\begin{eqnarray*}
		\sigma_k(h^i_j) &=& \sum\limits_{m=0}^k A_m+B_m+C_m\\
		&=& \sum\limits_{m=0}^k \dfrac{(-1)^m\rho^m\phi'^{k-m}}{D^{k+2}\phi^m}{n-m\choose k-m}\left[\phi^2\sigma_m(D^2 u) + \rho^2|\nabla u|^2 \sigma_m(D^2 u) \right]\\
		&&+\sum\limits_{m=0}^k \dfrac{(-1)^m\rho^{m+2}\phi'^{k-m}}{D^{k+2}\phi^m}{n-m\choose k-m}\dfrac{k-m}{n-m} u^iu_j[T_m]^j_i(D^2 u)\\
		&&+ \sum\limits_{m=1}^k \dfrac{(-1)^{m-1}\rho^{m+2}\phi'^{k-m}}{D^{k+2}\phi^{m}}{n-m\choose k-m}u^iu_pu^p_j[T_{m-1}]_i^j(D^2 u).
	\end{eqnarray*}
	Using \eqref{newton}, we get the conclusion (\ref{sigmak}).
\end{proof}

\subsection{Computation of curvature integrals and weighted curvature integrals}

\begin{lem}[Expression for $\displaystyle\int_{M}\sigma_k(\kappa) \mathrm{d}\mu_g\ (0\leqslant k\leqslant n)$]
	Let\ $\Omega\subset\mathbb{N}^{n+1}(K)\ (K=-1,1)$, $M=\partial\Omega=\{(\rho(1+u(x)),x):x\in\mathbb{S}^n\}$, where $u\in C^3(\mathbb{S}^n)$. Suppose $\|u\|_{W^{2,\infty}(\mathbb{S}^n)}<\varepsilon$, then for any\ $0\leqslant k\leqslant n$, there holds
	\begin{eqnarray}
		&&\displaystyle\int_M \sigma_k(\kappa) \mathrm{d}\mu_g\notag \\
		&=& \int_{\mathbb{S}^n} {n\choose k}\phi^{n-k}(\rho)\phi'^ k(\rho) \mathrm{d}A \notag\\
		&&+ \int_{\mathbb{S}^n} {n\choose k}\left[ (n-k)\phi^{n-k-1}(\rho)\phi'^{k+1}(\rho)-Kk\phi^{n-k+1}(\rho)\phi'^{k-1}(\rho) \right] \rho u \mathrm{d}A  \notag\\
		&&+ \int_{\mathbb{S}^n} {n\choose k} \left[\dfrac{(n-k)(n-k-1)}{2}\phi^{n-k-2}(\rho)\phi'^{k+2}(\rho) \right.\notag\\
		&&\ \ \ \ \ \ \ \ \ \ \ \ \ \ \ + K(k^2-kn-\dfrac{n}{2})\phi^{n-k}(\rho)\phi'^k(\rho) \notag\\
		&&\ \ \ \ \ \ \ \ \ \ \ \ \ \ \ + \left.\dfrac{k(k-1)}{2}\phi^{n-k+2}(\rho)\phi'^{k-2}(\rho) \right]\rho^2 u^2\mathrm{d}A  \notag\\
		&&+ \int_{\mathbb{S}^n} {n\choose k} \left[ \dfrac{(n-k)(k+1)}{2n}\phi^{n-k-2}(\rho)\phi'^k(\rho) \right.\left.- K \dfrac{k(k-1)}{2n}\phi^{n-k}(\rho)\phi'^{k-2}(\rho) \right]\rho^2 |\nabla u|^2 \mathrm{d}A \notag\\
		&&+ O(\varepsilon)\|u\|_{L^2(\mathbb{S}^n)}^2 + O(\varepsilon)\|\nabla u\|_{L^2(\mathbb{S}^n)}^2.   \label{ak111}
	\end{eqnarray}
	
\end{lem}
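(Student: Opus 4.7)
The plan is to substitute the explicit formula (\ref{sigmak}) for $\sigma_k(\kappa)$ together with the area element (\ref{area-elem}) $d\mu_g = \phi^{n-1} D\, dA$ into $\int_M \sigma_k(\kappa)\, d\mu_g$, and then to Taylor-expand every factor depending on $\rho(1+u)$ around the base point $\rho$, keeping terms up to order two in $u$ and absorbing the rest into the $O(\varepsilon)\|u\|_{L^2}^2 + O(\varepsilon)\|\nabla u\|_{L^2}^2$ remainder. The natural decomposition is a sum indexed by $m\in\{0,1,\dots,k\}$: for each $m$ we will combine the prefactor $\phi^{n-1}D/(D^{k+2}\phi^m) = \phi^{n-1-m} D^{-(k+1)}$ with the two $u$-dependent bracket terms $\phi^2 \sigma_m(D^2 u)$ and $\tfrac{k+n-2m}{n-m}\rho^2 u^i u_j [T_m]_i^j(D^2 u)$.

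The Taylor expansions I will use are $\phi(\rho(1+u)) = \phi(\rho) + \phi'(\rho)\rho u - \tfrac{K}{2}\phi(\rho)\rho^2 u^2 + O(\varepsilon)u^2$ and $\phi'(\rho(1+u)) = \phi'(\rho) - K\phi(\rho)\rho u - \tfrac{K}{2}\phi'(\rho)\rho^2 u^2 + O(\varepsilon)u^2$, coming from $\phi'' = -K\phi$ in (\ref{hd}); and $D^{-(k+1)} = \phi^{-(k+1)}\bigl(1 - \tfrac{k+1}{2}\rho^2|\nabla u|^2/\phi^2 + O(\varepsilon)|\nabla u|^2\bigr)$. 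Powers of $\phi$ and $\phi'$ are handled by the binomial-style expansion with $K$ appearing from the curvature relation, which is precisely where the $K$-weighted quadratic terms in the stated formula originate.

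The contributions will be split according to $m$. For $m = 0$ the bracket reduces to $\phi^2 + \tfrac{k}{n}\rho^2|\nabla u|^2$, and combining with the Taylor expansion of the prefactor produces the first three lines of (\ref{ak111}) plus the $|\nabla u|^2$ coefficient $\tfrac{(n-k)(k+1)}{2n}\phi^{n-k-2}\phi'^k$; here I will also use $\int_{\mathbb{S}^n}\rho^2 |\nabla u|^2\, dA$ directly. For $m = 1$ the bracket is linear in $\Delta u$ times first-order quantities in $u$, so after integration by parts $\int_{\mathbb{S}^n} u\,\Delta u\, dA = -\|\nabla u\|_{L^2}^2$ and $\int_{\mathbb{S}^n} u^i u_j u^j_i\, dA$ reduce to $\|\nabla u\|^2$ terms; this is what produces the $-K\tfrac{k(k-1)}{2n}\phi^{n-k}\phi'^{k-2}$ coefficient of $\rho^2|\nabla u|^2$. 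For $m\geq 2$, every term contains at least two derivatives of $u$ times another factor involving $u$, hence in $L^\infty$ norm is bounded by $\varepsilon\cdot(|u|^2 + |\nabla u|^2 + |D^2u||u| + |D^2 u||\nabla u|)$; after integration by parts those reduce to $O(\varepsilon)(\|u\|_{L^2}^2 + \|\nabla u\|_{L^2}^2)$, consistent with the error bound.

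The main bookkeeping obstacle will be collecting the Taylor-expansion contributions from the $m = 0$ and $m = 1$ pieces consistently, in particular tracking how the expansion of $\phi^{n-1-m}$, $\phi'^{k-m}$, $\phi^2$ and $D^{-(k+1)}$ interact to produce the precise $K$-dependent coefficients in the $\rho^2 u^2$ and $\rho^2 |\nabla u|^2$ integrands. Once the $m = 1$ integration by parts is performed and the cross terms (products of first-order expansions from different factors) are summed, the coefficients of $\rho^2 u^2$ collapse exactly to $\tfrac{(n-k)(n-k-1)}{2}\phi^{n-k-2}\phi'^{k+2} + K(k^2 - kn - \tfrac{n}{2})\phi^{n-k}\phi'^k + \tfrac{k(k-1)}{2}\phi^{n-k+2}\phi'^{k-2}$, yielding (\ref{ak111}).
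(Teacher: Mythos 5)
Your overall strategy---substitute \eqref{sigmak} and \eqref{area-elem}, Taylor-expand in the two variables $u$ and $|\nabla u|^2$ about $u=|\nabla u|^2=0$, and keep quadratic terms---is exactly the paper's, and your treatment of the $m=0$ piece and of the $m=1$ piece via $\int_{\mathbb{S}^n}u\,\Delta u\,\mathrm{d}A=-\|\nabla u\|_{L^2}^2$ is correct. However, there is a genuine gap in your dismissal of all $m\geqslant 2$ contributions as $O(\varepsilon)(\|u\|_{L^2}^2+\|\nabla u\|_{L^2}^2)$. The $m=2$ term carries the \emph{zeroth-order} coefficient $A_0^2$ (no factor of $u$ or $\nabla u$ attached), so its integral is $A_0^2\int_{\mathbb{S}^n}\sigma_2(D^2u)\,\mathrm{d}A$, and by the Ricci identity on $\mathbb{S}^n$ (this is one of the identities from Lemma 4.2 of \cite{VanBlargan2022QuantitativeQI} that the paper invokes)
\begin{equation*}
\int_{\mathbb{S}^n}\sigma_2(D^2u)\,\mathrm{d}A=\frac{1}{2}\int_{\mathbb{S}^n}\left[(\Delta u)^2-|D^2u|^2\right]\mathrm{d}A=\frac{n-1}{2}\int_{\mathbb{S}^n}|\nabla u|^2\,\mathrm{d}A+O(\varepsilon)\|\nabla u\|_{L^2(\mathbb{S}^n)}^2,
\end{equation*}
which is a \emph{leading-order} quadratic quantity, not an $O(\varepsilon)$ remainder. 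Your heuristic "two derivatives of $u$ times another factor involving $u$" fails precisely here, because $\sigma_2(D^2u)$ alone is already quadratic and its integral is comparable to $\|\nabla u\|_{L^2}^2$. (For $m\geqslant 3$ the reduction $\int\sigma_m(D^2u)=\frac{n-m+1}{2}\int|\nabla u|^2\sigma_{m-2}(D^2u)+O(\varepsilon)\|\nabla u\|^2$ does land in the error term, so those are genuinely negligible.)

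This omission is not cosmetic: the paper's coefficient of $|\nabla u|^2$ is $A^0+B^0-A_1^1+A_0^2\frac{n-1}{2}$, and the last summand is exactly what is needed, after applying $(\phi')^2+K\phi^2=1$, to turn the $K$-dependent part of $-A_1^1$ (which alone gives $-K\frac{k(k-1)}{n}{n\choose k}\phi^{n-k}\phi'^{k-2}$) into the stated $-K\frac{k(k-1)}{2n}{n\choose k}\phi^{n-k}\phi'^{k-2}$ and to supply part of the $\frac{(n-k)(k+1)}{2n}\phi^{n-k-2}\phi'^{k}$ term. For every $k\geqslant 2$ your computation as described would therefore produce an incorrect coefficient of $\rho^2|\nabla u|^2$. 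The fix is simply to retain the $m=2$ term with its constant coefficient and apply the displayed identity; with that correction your argument coincides with the paper's proof.
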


\begin{proof}
	By \eqref{area-elem} and \eqref{sigmak}, we have
	\begin{eqnarray*}
		\sigma_k(\kappa) \mathrm{d}\mu_g &=& \sum\limits_{m=0}^k (-1)^m {n-m \choose k-m}\frac{\phi'^{k-m}}{D^{k+1}}  \notag\\
		&&\times \left[\rho^m\phi^{n-m+1}\sigma_m(D^2u) + \dfrac{k+n-2m}{n-m} \rho^{m+2} \phi^{n-m-1}u^iu_j[T_m]_i^j(D^2 u)\right]\mathrm{d}A,
	\end{eqnarray*}
	where $r=\rho(1+u)$, $\phi=\phi(r)$, $\phi'=\phi'(r)$, $D=\sqrt{\phi^2(r)+\rho^2|\nabla u|^2}$ can all be seen as functions of two independent variables $u$ and $|\nabla u|^2$. We can use Taylor expansion to expand the equation above at $u=|\nabla u|^2=0$ (i.e. at $r\equiv\rho$), and use $\|u\|_{W^{2,\infty}(\mathbb{S}^n)}<\varepsilon$ to control the remainder terms, then get
	\begin{equation}\label{int-sigmak1}
		\begin{aligned}
			\displaystyle\int_M \sigma_k(\kappa) \mathrm{d}\mu_g=& \int_{\mathbb{S}^n}\sum\limits_{m=0}^k \left[\left(A_0^m+A_1^mu+A_2^mu^2+A^m|\nabla u|^2\right)\sigma_m(D^2u)+B^mu^iu_j[T_m]_i^j(D^2 u)\right]\mathrm{d}A\\
			&+O(\varepsilon)\|u\|_{L^2(\mathbb{S}^n)}^2+O(\varepsilon)\|\nabla u\|_{L^2(\mathbb{S}^n)}^2,
		\end{aligned}
	\end{equation}
	where for $i=0,1,2$,
	\begin{equation}\label{A_i^m}
		\begin{aligned}
			A_i^m=&\left(i!\right)^{-1}\left.\frac{\partial^i}{\partial^i u} \left[(-1)^m\rho^m {n-m \choose k-m}\frac{\phi'^{k-m}}{D^{k+1}}\phi^{n-m+1}\right]\right|_{u=|\nabla u|^2=0}\\
			=&\left(i!\right)^{-1}(-1)^m\rho^{m} {n-m \choose k-m}\left.\frac{\mathrm{d}^i}{\mathrm{d}^i r}\left[\phi'^{k-m}\phi^{n-m+1}\right]\right|_{r=\rho}\rho^i,
		\end{aligned}
	\end{equation}
	\begin{equation}\label{A^m}
		\begin{aligned}
			A^m=&\frac{\partial}{\partial \left(|\nabla u|^2\right)}\left. \left[(-1)^m\rho^m {n-m \choose k-m}\frac{\phi'^{k-m}}{D^{k+1}}\phi^{n-m+1}\right]\right|_{u=|\nabla u|^2=0}\\
			=&(-1)^{m+1}\rho^{m+2}\frac{k+1}{2}{n-m \choose k-m}\phi'^{k-m}(\rho)\phi^{n-m-k-2}(\rho),
		\end{aligned}
	\end{equation}
	\begin{equation}\label{B^m}
		\begin{aligned}
			B^m=&\left.(-1)^m\rho^{m+2}{n-m \choose k-m}\dfrac{k+n-2m}{n-m} \left[\frac{\phi'^{k-m}}{D^{k+1}}\phi^{n-m-1}\right]\right|_{u=|\nabla u|^2=0}\\
			=&(-1)^m\rho^{m+2}{n-m \choose k-m}\dfrac{k+n-2m}{n-m} \phi'^{k-m}(\rho)\phi^{n-m-1}(\rho).
		\end{aligned}
	\end{equation}
	
	Using the following identities which have been proved in \cite{VanBlargan2022QuantitativeQI} Lemma 4.2:
	\begin{eqnarray}
		\int_{\mathbb{S}^n} u^iu_j[T_m]_i^j(D^2 u)\mathrm{d}A &=& \dfrac{m+2}{2} \int_{\mathbb{S}^n} |\nabla u|^2\sigma_m(D^2 u) \mathrm{d}A \notag\\
		&& + O(\varepsilon)\|\nabla u\|_{L^2(\mathbb{S}^n)}^2\ \ (m\geqslant 1),\\
		\int_{\mathbb{S}^n} \sigma_m(D^2 u) \mathrm{d}A &=& \dfrac{n-m+1}{2}\int_{\mathbb{S}^n} |\nabla u|^2\sigma_{m-2}(D^2 u)\mathrm{d}A \notag \\
		&& +O(\varepsilon) \|\nabla u\|_{L^2(\mathbb{S}^n)}^2\ \ (m\geqslant 2),\\
		\int_{\mathbb{S}^n} \sigma_1(D^2 u) \mathrm{d}A &=& 0,\\
		\int_{\mathbb{S}^n} u\sigma_m(D^2 u) \mathrm{d}A &=& -\dfrac{m+1}{2m}\int_{\mathbb{S}^n} |\nabla u|^2\sigma_{m-1}(D^2 u) \mathrm{d}A \notag\\
		&&+ O(\varepsilon) \|\nabla u\|_{L^2(\mathbb{S}^n)}^2\ \ (m\geqslant 1),\\
		\int_{\mathbb{S}^n} u^2\sigma_m(D^2 u) \mathrm{d}A &=& O(\varepsilon) \|\nabla u\|_{L^2(\mathbb{S}^n)}^2\ \ (m\geqslant 1),
	\end{eqnarray}
	we can rewrite \eqref{int-sigmak1} as
	\begin{eqnarray}
		&&\displaystyle\int_M \sigma_k(\kappa) \mathrm{d}\mu_g \notag \\
		&=&\int_{\mathbb{S}^n} \left[A_0^0+A_1^0u+A_2^0u^2+(A^0+B^0)|\nabla u|^2\right]\mathrm{d}A  \notag\\
		&&+\int_{\mathbb{S}^n} \left[-A_1^1|\nabla u|^2+(A^1+\frac{3}{2}B^1)|\nabla u|^2\sigma_1(D^2u)\right]\mathrm{d}A \notag\\
		&&+\int_{\mathbb{S}^n}\sum\limits_{m=2}^k\left[A_0^m\frac{n-m+1}{2}|\nabla u|^2\sigma_{m-2}(D^2u)-A_1^m\frac{m+1}{2m}|\nabla u|^2\sigma_{m-1}(D^2u)\right. \notag\\
		&&\quad\quad\quad\quad\quad\left.+A^m|\nabla u|^2\sigma_{m}(D^2u)+B^m\frac{m+2}{2}|\nabla u|^2\sigma_{m}(D^2u)\right]\mathrm{d}A \notag\\
		&&+O(\varepsilon)\|u\|_{L^2(\mathbb{S}^n)}^2+O(\varepsilon)\|\nabla u\|_{L^2(\mathbb{S}^n)}^2\notag\\
		&=&\int_{\mathbb{S}^n} \left[A_0^0+A_1^0u+A_2^0u^2\right]\mathrm{d}A \notag\\
		&&+\sum\limits_{m=0}^{k-2}\int_{\mathbb{S}^n}\left[A^m+B^m\frac{m+2}{2}-A_1^{m+1}\frac{m+2}{2(m+1)}+A_0^{m+2}\frac{n-m-1}{2}\right]|\nabla u|^2\sigma_m(D^2 u)\mathrm{d}A \notag\\
		&&+\int_{\mathbb{S}^n}\left[A^{k-1}+B^{k-1}\frac{k+1}{2}-A_1^{k}\frac{k+1}{2k}\right]|\nabla u|^2\sigma_{k-1}(D^2 u)\mathrm{d}A\notag\\
		&&+\int_{\mathbb{S}^n}\left[A^{k}+B^{k}\frac{k+2}{2}\right]|\nabla u|^2\sigma_k(D^2 u) \mathrm{d}A \notag\\
		&&+O(\varepsilon)\|u\|_{L^2(\mathbb{S}^n)}^2+O(\varepsilon)\|\nabla u\|_{L^2(\mathbb{S}^n)}^2.
	\end{eqnarray}

	Note that for $1\leqslant k\leqslant n$,
	$$ \sum\limits_{m=1}^k \int_{\mathbb{S}^n}  C(n,m,k,\rho)|\nabla u|^2 \sigma_m(D^2 u) \mathrm{d}A =O(\varepsilon)\|\nabla u\|_{L^2(\mathbb{S}^n)}^2,$$
	we have
	\begin{eqnarray}
		\displaystyle\int_M \sigma_k(\kappa) \mathrm{d}\mu_g&=&
		\int_{\mathbb{S}^n} \left[A_0^0+A_1^0u+A_2^0u^2\right]\mathrm{d}A+\int_{\mathbb{S}^n}\left[A^0+B^0-A_1^{1}+A_0^{2}\frac{n-1}{2}\right]|\nabla u|^2\mathrm{d}A \notag\\
		&&+O(\varepsilon)\|u\|_{L^2(\mathbb{S}^n)}^2+O(\varepsilon)\|\nabla u\|_{L^2(\mathbb{S}^n)}^2.
	\end{eqnarray}
	Then by \eqref{A_i^m}, \eqref{A^m}, \eqref{B^m} and direct calculation, using \eqref{hd}, we get the formula \eqref{ak111}.
\end{proof}


\begin{lem}[Expression for $\displaystyle\int_M \Phi\sigma_k(\kappa)\mathrm{d}\mu_g\ (0\leqslant k\leqslant n)$]
	Let $M=\{(\rho(1+u(x)),x):x\in\mathbb{S}^n\}$ be a nearly spherical set in $\mathbb{N}^{n+1}(K)\ (K=1,0,-1)$, where $u\in C^3(\mathbb{S}^n)$. $\Phi(r)$ is defined in \eqref{Phi}. Suppose $\|u\|_{W^{2,\infty}(\mathbb{S}^n)}<\varepsilon$, then for any $0\leqslant k\leqslant n$, there holds

	\begin{eqnarray}
		&&\int_M \Phi(r)\sigma_k(\kappa) \mathrm{d}\mu_g \notag\\
		&=& \int_{\mathbb{S}^n} {n\choose k} \phi^{n-k}(\rho) \phi'^k(\rho) \Phi(\rho) \mathrm{d}A \notag\\
		&&+ \int_{\mathbb{S}^n} {n\choose k} \left\lbrace  \left[(n-k)\phi^{n-k-1}(\rho)\phi'^{k+1}(\rho) - Kk\phi^{n-k+1}(\rho)\phi'^{k-1}(\rho) \right]\Phi(\rho) \right.\notag\\
		&&\ \ \ \ \ \ \ \ \ \ \ \ \ \ \ \left.+ \phi^{n-k+1}(\rho)\phi'^k(\rho) \right\rbrace \rho u\mathrm{d}A  \notag\\
		&&+ \int_{\mathbb{S}^n} {n\choose k} \left\lbrace \left[ \dfrac{(n-k)(n-k-1)}{2}\phi^{n-k-2}(\rho)\phi'^{k+2}(\rho) \right.\right.\notag\\
		&&\ \ \ \ \ \ \ \ \ \ \ \ \ \ \ \left.\left. +K(k^2-kn-\dfrac{n}{2})\phi^{n-k}(\rho)\phi'^k(\rho) + \dfrac{k(k-1)}{2}\phi^{n-k+2}(\rho)\phi'^{k-2}(\rho) \right]\Phi(\rho) \right. \notag\\
		&&\ \ \ \ \ \ \ \ \ \ \ \ \ \ \ \left.+(n-k+\frac{1}{2})\phi^{n-k}(\rho)\phi'^{k+1}(\rho) - Kk\phi^{n-k+2}(\rho)\phi'^{k-1}(\rho) \right\rbrace \rho^2 u^2 \mathrm{d}A \notag\\
		&&+\int_{\mathbb{S}^n} {n \choose k} \left\lbrace \left[ \dfrac{(n-k)(k+1)}{2n}\phi^{n-k-2}(\rho)\phi'^{k}(\rho) \right. -K\dfrac{k(k-1)}{2n}\phi^{n-k}(\rho)\phi'^{k-2}(\rho)  \right] \Phi(\rho) \notag\\
		&&\ \ \ \ \ \ \ \ \ \ \ \ \ \ \ \left.+\dfrac{k}{n} \phi^{n-k}(\rho)\phi'^{k-1}(\rho) \right\rbrace \rho^{2}|\nabla u|^2 \mathrm{d}A \notag\\
		&&+ O(\varepsilon)\|u\|_{L^2(\mathbb{S}^n)}^2 + O(\varepsilon) \|\nabla u\|_{L^2(\mathbb{S}^n)}^2. \label{PhiinN}
	\end{eqnarray}
	
\end{lem}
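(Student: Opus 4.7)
The plan is to follow exactly the same strategy as in the proof of the preceding lemma for $\int_M \sigma_k(\kappa)\,\mathrm{d}\mu_g$, treating the extra weight $\Phi(r)=\Phi(\rho(1+u))$ as one more factor that is Taylor-expanded at $u=|\nabla u|^2=0$. Starting from the identity
\begin{equation*}
\sigma_k(\kappa)\,\mathrm{d}\mu_g=\sum_{m=0}^k(-1)^m{n-m\choose k-m}\frac{\phi'^{k-m}}{D^{k+1}}\Bigl[\rho^m\phi^{n-m+1}\sigma_m(D^2u)+\tfrac{k+n-2m}{n-m}\rho^{m+2}\phi^{n-m-1}u^iu_j[T_m]_i^j(D^2u)\Bigr]\mathrm{d}A
\end{equation*}
which was obtained via \eqref{sigmak} and \eqref{area-elem}, I would multiply both sides by $\Phi(\rho(1+u))$ and view every function of $r=\rho(1+u)$ and $D=\sqrt{\phi^2+\rho^2|\nabla u|^2}$ as a smooth function of the two independent variables $u$ and $|\nabla u|^2$.

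Next I would perform the second-order Taylor expansion in $(u,|\nabla u|^2)$ at $(0,0)$, exactly as in \eqref{int-sigmak1}, with the remainder controlled by $\|u\|_{W^{2,\infty}(\mathbb{S}^n)}<\varepsilon$ and absorbed into $O(\varepsilon)\|u\|_{L^2}^2+O(\varepsilon)\|\nabla u\|_{L^2}^2$. The only difference with the unweighted case is that each coefficient $A_i^m$, $A^m$, $B^m$ of \eqref{A_i^m}--\eqref{B^m} picks up the extra factor $\Phi(\rho(1+u))$ before differentiation; using $\Phi'=\phi$ together with \eqref{hd} one finds
\begin{equation*}
\Phi(\rho(1+u))=\Phi(\rho)+\phi(\rho)\rho u+\tfrac{1}{2}\phi'(\rho)\rho^2 u^2+O(u^3),
\end{equation*}
so that the new coefficients are
$\widetilde A_0^m=\Phi(\rho)A_0^m$,
$\widetilde A_1^m=\Phi(\rho)A_1^m+\phi(\rho)\rho\,A_0^m$,
$\widetilde A_2^m=\Phi(\rho)A_2^m+\phi(\rho)\rho\,A_1^m+\tfrac12\phi'(\rho)\rho^2\,A_0^m$,
$\widetilde A^m=\Phi(\rho)A^m$, and $\widetilde B^m=\Phi(\rho)B^m$.

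Then I would apply verbatim the four integral identities of \cite{VanBlargan2022QuantitativeQI} Lemma 4.2 (for $\int\sigma_m(D^2u)$, $\int u\sigma_m(D^2u)$, $\int u^2\sigma_m(D^2u)$ and $\int u^iu_j[T_m]_i^j(D^2u)$) to rewrite every higher-order quantity of the form $|\nabla u|^2\sigma_m(D^2u)$ with $m\ge 1$ as an $O(\varepsilon)\|\nabla u\|_{L^2}^2$ error. Exactly as in the previous lemma the telescoping reduction leaves only the terms
\begin{equation*}
\int_{\mathbb{S}^n}\bigl[\widetilde A_0^0+\widetilde A_1^0 u+\widetilde A_2^0 u^2\bigr]\mathrm{d}A+\int_{\mathbb{S}^n}\Bigl[\widetilde A^0+\widetilde B^0-\widetilde A_1^1+\tfrac{n-1}{2}\widetilde A_0^2\Bigr]|\nabla u|^2\mathrm{d}A
\end{equation*}
up to the error $O(\varepsilon)\|u\|_{L^2}^2+O(\varepsilon)\|\nabla u\|_{L^2}^2$. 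Substituting the explicit values of $\widetilde A_i^m$ in terms of the unweighted $A_i^m, A^m, B^m$ evaluated via \eqref{A_i^m}--\eqref{B^m} at $r=\rho$, and simplifying derivatives of $\phi^a\phi'^b$ with $\phi''=-K\phi$, one recovers the constant, linear and quadratic coefficients in $u$ predicted by the statement: every bracket multiplying $\Phi(\rho)$ reproduces the coefficient from the unweighted formula \eqref{ak111}, while the additional contributions coming from the Taylor expansion of $\Phi$ produce precisely the extra terms $\phi^{n-k+1}\phi'^k\rho u$, $(n-k+\tfrac12)\phi^{n-k}\phi'^{k+1}\rho^2u^2-Kk\phi^{n-k+2}\phi'^{k-1}\rho^2 u^2$ and $\tfrac{k}{n}\phi^{n-k}\phi'^{k-1}\rho^2|\nabla u|^2$ in \eqref{PhiinN}. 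The main obstacle is bookkeeping: there is no conceptual new ingredient beyond the preceding lemma, but one has to carefully track how the three extra terms in the Taylor polynomial of $\Phi$ interact with the $\widetilde A_1^1$ and $\widetilde A_0^2$ coefficients to produce the correct numerical factors of $\tfrac{k(k-1)}{2n}$ and $\tfrac{k}{n}$ on the $|\nabla u|^2$ line; this is precisely where using $\phi''=-K\phi$ consistently is required.
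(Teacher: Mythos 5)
Your proposal follows exactly the paper's own route: the paper likewise multiplies the integrand from \eqref{sigmak} and \eqref{area-elem} by the Taylor expansion $\Phi(\rho(1+u))=\Phi(\rho)+\phi(\rho)\rho u+\tfrac12\phi'(\rho)\rho^2u^2+o(u^2)$ and then repeats the computation leading to \eqref{ak111}, which is precisely your modified-coefficient bookkeeping with $\widetilde A_i^m$, $\widetilde A^m$, $\widetilde B^m$. The argument is correct and, if anything, more explicit than the paper's one-line reduction to "a similar computation."
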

\begin{proof}
	Notice that
	\begin{eqnarray}
		&&\int_M \Phi\sigma_k(\kappa) \mathrm{d}\mu_g \notag\\
		&=& \int_{\mathbb{S}^n} \sum\limits_{m=0}^k \dfrac{ (-1)^m\rho^m \phi'^{k-m}(\rho(1+u))\phi^{n-m-1}(\rho(1+u)) \Phi(\rho(1+u)) }{\left(\phi^2+\rho^2|\nabla u|^2\right)^{\frac{k+1}{2}}}{n-m \choose k-m} \notag\\
		&&\times \left[ \phi^2(\rho(1+u))\sigma_m(D^2 u) + \dfrac{k+n-2m}{n-m} \rho^2 u^iu_j[T_m]_i^j(D^2 u) \right] \mathrm{d}A,
	\end{eqnarray}
	we expand $\Phi(\rho(1+u))$ at $u=0$ as
	\begin{equation}
		\Phi(\rho(1+u)) = \Phi(\rho) + \phi(\rho) \rho u + \dfrac{1}{2} \phi'(\rho) \rho^2 u^2 + o(u^2),
	\end{equation}
	and after a similar computation as for (\ref{ak111}), we get the conclusion.
\end{proof}

\begin{lem}[Expression for $\displaystyle\int_M \phi'\sigma_k(\kappa)\mathrm{d}\mu_g\ (0\leqslant k\leqslant n)$]
	Let $M=\{(\rho(1+u(x)),x):x\in\mathbb{S}^n\}$ be a nearly spherical set in $\mathbb{N}^{n+1}(K)\ (K=1,-1)$, where $u\in C^3(\mathbb{S}^n)$. Suppose $\|u\|_{W^{2,\infty}(\mathbb{S}^n)}<\varepsilon$, then for any $0\leqslant k\leqslant n$, there holds
	\begin{eqnarray}
		&&\int_M \phi'(r)\sigma_k(\kappa) \mathrm{d}\mu_g \notag\\
		&=& \int_{\mathbb{S}^n} {n\choose k} \phi^{n-k}(\rho)\phi'^{k+1}(\rho) \mathrm{d}A \notag\\
		&& + \int_{\mathbb{S}^n} {n\choose k} \left[ (n-k)\phi^{n-k-1}(\rho)\phi'^{k+2}(\rho) \right.
		\left.- K(k+1)\phi^{n-k+1}(\rho)\phi'^k(\rho) \right] \rho u \mathrm{d}A \notag\\
		&&+ \int_{\mathbb{S}^n} {n\choose k}\left[ \dfrac{(n-k)(n-k-1)}{2}\phi^{n-k-2}(\rho)\phi'^{k+3}(\rho)  \right. \notag\\
		&&\ \ \ \ \ \ \ \ \ \ \ \ \ \ +\left. K(k^2-kn+k-\dfrac{3}{2}n-\dfrac{1}{2})\phi^{n-k}(\rho)\phi'^{k+1}(\rho) \right.\notag\\
		&&\ \ \ \ \ \ \ \ \ \ \ \ \ \ \left.+ \dfrac{k(k+1)}{2}\phi^{n-k+2}(\rho)\phi'^{k-1}(\rho) \right] \rho^2u^2 \mathrm{d}A  \notag\\
		&&+\int_{\mathbb{S}^n} {n\choose k} \left[ \dfrac{(n-k)(k+1)}{2n} \phi^{n-k-2}(\rho)\phi'^{k+1}(\rho) \right. \left.- K\dfrac{k(k+1)}{2n}\phi^{n-k}(\rho)\phi'^{k-1}(\rho) \right]\rho^2|\nabla u|^2 \mathrm{d}A \notag \\
		&&+ O(\varepsilon) \|u\|_{L^2(\mathbb{S}^n)}^2 + O(\varepsilon) \|\nabla u\|_{L^2(\mathbb{S}^n)}^2.  \label{phi'sigmak}
	\end{eqnarray}
\end{lem}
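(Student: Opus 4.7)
The strategy is to mirror the proof of the previous two lemmas (formulas \eqref{ak111} and \eqref{PhiinN}): start from the pointwise expression for $\sigma_k(\kappa)\,\mathrm{d}\mu_g$ coming from \eqref{sigmak} and \eqref{area-elem}, multiply by $\phi'(r)$ with $r=\rho(1+u)$, treat the result as a smooth function of the independent variables $u$ and $|\nabla u|^2$, and Taylor expand around $u=|\nabla u|^2=0$. The hypothesis $\|u\|_{W^{2,\infty}(\mathbb{S}^n)}<\varepsilon$ controls every remainder by $O(\varepsilon)\|u\|_{L^2}^2+O(\varepsilon)\|\nabla u\|_{L^2}^2$, exactly as in the previous two proofs.

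The new ingredient compared to \eqref{PhiinN} is the Taylor expansion of the weight $\phi'$ rather than $\Phi$. Using the identities \eqref{hd}, namely $\phi''=-K\phi$ and $\phi'''=-K\phi'$, I would expand
\begin{equation*}
\phi'(\rho(1+u)) = \phi'(\rho) - K\phi(\rho)\rho u - \tfrac{1}{2}K\phi'(\rho)\rho^2 u^2 + o(u^2).
\end{equation*}
Plugging this into the expression for $\int_M \phi' \sigma_k(\kappa)\,\mathrm{d}\mu_g$ and collecting powers of $u$ and $|\nabla u|^2$ gives coefficients
$A_i^m$, $A^m$, $B^m$ analogous to \eqref{A_i^m}, \eqref{A^m}, \eqref{B^m}, but with the polynomial factor $\phi'^{k-m}(\rho)\phi^{n-m+1}(\rho)$ replaced by $\phi'^{k-m+1}(\rho)\phi^{n-m+1}(\rho)$ (and parallel modifications for $A^m$, $B^m$). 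Each time a factor of $K$ appears in \eqref{phi'sigmak} but not in \eqref{ak111}, it is produced by one of the two $-K$-terms from the $\phi'$ expansion above contributing to the $\rho u$ or $\rho^2 u^2$ coefficient.

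To pass from the full Taylor sum to the stated expression, I would apply verbatim the identities from \cite{VanBlargan2022QuantitativeQI} Lemma 4.2 that were used in proving \eqref{ak111}: the integrals of $\sigma_m(D^2 u)$, $u\,\sigma_m(D^2 u)$, $u^2\,\sigma_m(D^2 u)$, and $u^iu_j[T_m]_i^j(D^2 u)$ against $|\nabla u|^2$ all reduce to a single quadratic form in $u$ and $\nabla u$ modulo $O(\varepsilon)$ errors. As in the proof of \eqref{ak111}, for $m\geqslant 1$ the surviving terms containing $\sigma_m(D^2u)$ telescope: after invoking the collapse identity
\begin{equation*}
\sum_{m=1}^k \int_{\mathbb{S}^n} C(n,m,k,\rho)\,|\nabla u|^2\,\sigma_m(D^2 u)\,\mathrm{d}A = O(\varepsilon)\|\nabla u\|_{L^2(\mathbb{S}^n)}^2,
\end{equation*}
only the $m=0$ combination $A^0+B^0-A_1^1+\tfrac{n-1}{2}A_0^2$ remains as the coefficient of $|\nabla u|^2$.

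The main obstacle is purely bookkeeping: arranging the contributions so that each of the four brackets in \eqref{phi'sigmak} matches a specific combination of expanded terms. In particular, the coefficient of $\rho^2 u^2$ in \eqref{phi'sigmak} involves the three curvature-adjusted terms $\tfrac{(n-k)(n-k-1)}{2}\phi^{n-k-2}\phi'^{k+3}$, $K(k^2-kn+k-\tfrac{3n+1}{2})\phi^{n-k}\phi'^{k+1}$, and $\tfrac{k(k+1)}{2}\phi^{n-k+2}\phi'^{k-1}$, which must be obtained by combining the $u^2$ coefficient of $\phi'(\rho(1+u))\cdot \phi^{n-k}\phi'^k$ (already contributing a $-\tfrac{K}{2}\phi'^{k+1}\phi^{n-k}$), the $u$ coefficient of $\phi'$ times the $u$ coefficient of the rest, and the $u^2$ coefficient of the rest times $\phi'(\rho)$. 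A direct expansion with \eqref{hd} and care with the binomial prefactors yields the stated formula.
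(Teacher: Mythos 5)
Your proposal is correct and follows essentially the same route as the paper: the paper's own proof inserts the factor $\phi'(\rho(1+u))$ into the integrand built from \eqref{sigmak} and \eqref{area-elem}, expands $\phi'(\rho(1+u)) = \phi'(\rho) - K\phi(\rho)\rho u - \tfrac{1}{2}K\phi'(\rho)\rho^2 u^2 + o(u^2)$ via \eqref{hd}, and then repeats the computation used for \eqref{ak111}. Your write-up just makes explicit the bookkeeping (the modified $A_i^m$, $A^m$, $B^m$ and the surviving $m=0$ combination) that the paper compresses into ``a similar computation as for \eqref{ak111}''.
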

\begin{proof}
	Notice that
	\begin{eqnarray}
		&&\int_M \phi'(r)\sigma_k(\kappa) \mathrm{d}\mu_g \notag\\
		&=& \int_{\mathbb{S}^n} \sum\limits_{m=0}^k \dfrac{ (-1)^m\rho^m \phi'^{k-m+1}(\rho(1+u))\phi^{n-m-1}(\rho(1+u))}{\left(\phi^2+\rho^2|\nabla u|^2\right)^{\frac{k+1}{2}}}{n-m \choose k-m}  \notag\\
		&&\times \left[ \phi^2(\rho(1+u))\sigma_m(D^2 u) + \dfrac{k+n-2m}{n-m} \rho^2 u^iu_j[T_m]_i^j(D^2 u) \right] \mathrm{d}A,
	\end{eqnarray}
	we expand $\phi'(\rho(1+u))$ at $u=0$ as
	\begin{equation}
		\phi'(\rho(1+u)) = \phi'(\rho) - K\phi(\rho) \rho u - \dfrac{1}{2} K\phi'(\rho) \rho^2 u^2 + o(u^2),
	\end{equation}
	and after a similar computation as for (\ref{ak111}), we get the conclusion.
\end{proof}


\subsection{Computation of $\mathscr{A}_k(\Omega)-\mathscr{A}_k(\overline{B}_{\rho})$}
First we compute the expression for $\mathscr{A}_k(\Omega)-\mathscr{A}_k(\overline{B}_{\rho})\ (-1\leqslant k\leqslant 1)$, then use iteration process to derive the expression for $k\leqslant n$. Note that the principal curvatures of\ $\partial\overline{B}_{\rho}$ are\ $\kappa_1=\kappa_2=\cdots =\kappa_n = \dfrac{\phi'(\rho)}{\phi(\rho)}$.

From (\ref{a-1omega}), we get
\begin{eqnarray}
	\mathscr{A}_{-1}(\Omega)-\mathscr{A}_{-1}(\overline{B}_{\rho})
	&=& \int_{\mathbb{S}^n}\left(\int_{\rho}^{\rho(1+u)} \phi^n(r)\mathrm{d}r \right)\mathrm{d}A\notag\\
	&=& \int_{\mathbb{S}^n} \phi^n(\rho)\rho u \mathrm{d}A+\int_{\mathbb{S}^n} \dfrac{n}{2}\phi^{n-1}(\rho)\phi'(\rho)\rho^2 u^2 \mathrm{d}A \notag \\
	&&+ O(\varepsilon)\|u\|_{L^2(\mathbb{S}^n)}^2.  \label{a-1ob}
\end{eqnarray}

Since $\mathscr{A}_0(\Omega)=\displaystyle\int_{M}\sigma_0(\kappa)\mathrm{d}\mu_g$, we can take $k=0$ in \eqref{ak111} and get
\begin{eqnarray}
	\mathscr{A}_{0}(\Omega)-\mathscr{A}_{0}(\overline{B}_{\rho})
	&=&  \int_{\mathbb{S}^n} n\phi^{n-1}(\rho)\phi'(\rho) \rho u\mathrm{d}A \notag \\
	&&+ \int_{\mathbb{S}^n} \left[\dfrac{1}{2} \left(n(n-1)\phi^{n-2}(\rho)\phi'^2(\rho) -nK\phi^n(\rho) \right)\right] \rho^2 u^2\mathrm{d}A \notag\\
	&&+ \int_{\mathbb{S}^n} \dfrac{1}{2} \phi^{n-2}(\rho) \rho^2|\nabla u|^2 \mathrm{d}A \notag\\
	&&+ O(\varepsilon)\|u\|_{L^2(\mathbb{S}^n)}^2 + O(\varepsilon)\|\nabla u\|_{L^2(\mathbb{S}^n)}^2.  \label{a0ob}
\end{eqnarray}

Combining (\ref{a1omega}), taking $k=1$ in (\ref{ak111}) and inserting \eqref{a-1ob}, we get
\begin{eqnarray}
	&&\mathscr{A}_{1}(\Omega)-\mathscr{A}_{1}(\overline{B}_{\rho}) \notag\\ 
	&=& \int_{\mathbb{S}^n} {n\choose 1}\left[(n-1)\phi^{n-2}(\rho)\phi'^2(\rho) -K\phi^n(\rho)  \right] \rho u \mathrm{d}A \notag\\
	&& + \int_{\mathbb{S}^n} {n\choose 1}\left[ \dfrac{(n-1)(n-2)}{2}\phi^{n-3}(\rho)\phi'^{3}(\rho) \right. \left.+K(1-\dfrac{3}{2}n)\phi^{n-1}(\rho)\phi'(\rho) \right] \rho^2 u^2\mathrm{d}A \notag\\
	&& + \int_{\mathbb{S}^n} {n \choose 1} \dfrac{n-1}{n} \phi^{n-3}(\rho)\phi'(\rho)\rho^2|\nabla u|^2 \mathrm{d}A  + O(\varepsilon)\|u\|_{L^2(\mathbb{S}^n)}^2 + O(\varepsilon) \|\nabla u\|_{L^2(\mathbb{S}^n)}^2\notag\\
	&&+Kn(\mathscr{A}_{-1}(\Omega)-\mathscr{A}_{-1}(\overline{B}_{\rho})).\notag \\
&=&\int_{\mathbb{S}^n} {n\choose 1}(n-1)\phi^{n-2}(\rho)\phi'^2(\rho) \rho u \mathrm{d}A \notag\\
	&& + \int_{\mathbb{S}^n} {n\choose 1}\left[ \dfrac{(n-1)(n-2)}{2}\phi^{n-3}(\rho)\phi'^{3}(\rho)\right.  +K(1-n)\phi^{n-1}(\rho)\phi'(\rho)\bigg] \rho^2 u^2\mathrm{d}A \notag\\
	&& + \int_{\mathbb{S}^n} {n \choose 1} \dfrac{n-1}{n} \phi^{n-3}(\rho)\phi'(\rho)\rho^2|\nabla u|^2 \mathrm{d}A \notag\\
	&& + O(\varepsilon)\|u\|_{L^2(\mathbb{S}^n)}^2 + O(\varepsilon) \|\nabla u\|_{L^2(\mathbb{S}^n)}^2. \label{a1ob}
\end{eqnarray}

\begin{lem}[Expression for $\mathscr{A}_{k}(\Omega)-\mathscr{A}_{k}(\overline{B}_{\rho})\ (k\geqslant 0)$]
	Let\ $\Omega\subset\mathbb{N}^{n+1}(K)\ (K=-1,1)$, $M=\partial\Omega=\{(\rho(1+u(x)),x):x\in\mathbb{S}^n\}$, where $u\in C^3(\mathbb{S}^n)$. Suppose $\|u\|_{W^{2,\infty}(\mathbb{S}^n)}<\varepsilon$, then for any\ $0\leqslant k\leqslant n$, there holds
	\begin{eqnarray}
		\mathscr{A}_{k}(\Omega)-\mathscr{A}_{k}(\overline{B}_{\rho})
		&=& \int_{\mathbb{S}^n}{n\choose k}(n-k)\phi^{n-k-1}(\rho)\phi'^{k+1}(\rho) \rho u \mathrm{d}A \notag\\
		&&+ \int_{\mathbb{S}^n} {n\choose k} \left[ \dfrac{(n-k)(n-k-1)}{2}\phi^{n-k-2}(\rho)\phi'^{k+2}(\rho)\right.\notag\\
		&&\ \ \ \ \ \ \ \ \ \ \ \ \ \ \left. -K\dfrac{(n-k)(k+1)}{2}\phi^{n-k}(\rho)\phi'^k(\rho) \right] \rho^2 u^2 \mathrm{d}A  \notag\\
		&&+ \int_{\mathbb{S}^n}  {n\choose k} \dfrac{(n-k)(k+1)}{2n}\phi^{n-k-2}(\rho)\phi'^k(\rho) \rho^2|\nabla u|^2  \mathrm{d}A \notag\\
		&&+O(\varepsilon)\|u\|_{L^2(\mathbb{S}^n)}^2 + O(\varepsilon) \|\nabla u\|_{L^2 (\mathbb{S}^n)}^2. \label{akob}
	\end{eqnarray}
\end{lem}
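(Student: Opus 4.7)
The plan is to proceed by induction on $k$, using the recursion formula \eqref{akomega} to reduce the computation of $\mathscr{A}_k(\Omega)-\mathscr{A}_k(\overline{B}_\rho)$ to the already-computed curvature integral $\int_M\sigma_k(\kappa)\mathrm{d}\mu_g$ from \eqref{ak111} plus a lower-order term treated by the inductive hypothesis. The cases $k=0$ and $k=1$ are the base cases, and they are exactly \eqref{a0ob} and \eqref{a1ob} established above; note the formula $\binom{n}{k}\phi^{n-k}(\rho)\phi'^k(\rho)$ for the integrand on $\partial\overline{B}_\rho$ follows since its principal curvatures are all $\phi'(\rho)/\phi(\rho)$ and its area element on $\mathbb{S}^n$ is $\phi^n(\rho)\mathrm{d}A$, which is what the leading term in \eqref{ak111} reflects.

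For $2\leqslant k\leqslant n$, applying \eqref{akomega} to both $\Omega$ and $\overline{B}_\rho$ and subtracting yields
\begin{equation*}
\mathscr{A}_k(\Omega)-\mathscr{A}_k(\overline{B}_\rho)=\Big(\int_M\sigma_k(\kappa)\mathrm{d}\mu_g-\int_{\partial\overline{B}_\rho}\sigma_k(\kappa)\mathrm{d}\mu_g\Big)+K\tfrac{n-k+1}{k-1}\big(\mathscr{A}_{k-2}(\Omega)-\mathscr{A}_{k-2}(\overline{B}_\rho)\big).
\end{equation*}
For the first parenthesis I would substitute \eqref{ak111}: the constant term $\binom{n}{k}\phi^{n-k}(\rho)\phi'^k(\rho)$ cancels the integral over $\partial\overline{B}_\rho$, leaving the three linear/quadratic integrals in $u,u^2,|\nabla u|^2$ with the coefficients written there. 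For the second parenthesis, I would plug in the formula \eqref{akob} at level $k-2$ (for $k=2,3$ one invokes \eqref{a-1ob} or \eqref{a0ob} in place of the inductive hypothesis).

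The only nontrivial step is then algebra: one must verify that adding the three coefficients from \eqref{ak111} to $K\frac{n-k+1}{k-1}$ times the three coefficients from the inductive hypothesis \eqref{akob} at level $k-2$ produces exactly the claimed coefficients at level $k$. The coefficient of $\rho u$ requires combining $\binom{n}{k}[(n-k)\phi^{n-k-1}\phi'^{k+1}-Kk\phi^{n-k+1}\phi'^{k-1}]$ with $K\frac{n-k+1}{k-1}\binom{n}{k-2}(n-k+2)\phi^{n-k+1}\phi'^{k-1}$; using $\binom{n}{k-2}\frac{(n-k+1)(n-k+2)}{k-1}=k\binom{n}{k}$, the two $\phi^{n-k+1}\phi'^{k-1}$ terms cancel, leaving the claimed single term. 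The coefficient of $\rho^2|\nabla u|^2$ is handled similarly via the same binomial identity; the $-K\binom{n}{k}k(k-1)/(2n)$ piece in \eqref{ak111} cancels against the contribution from the inductive quadratic-gradient term at level $k-2$. The coefficient of $\rho^2 u^2$ is the most delicate: it involves three $\phi$-powers and one must use the identity $\phi'^2+K\phi^2=1$ from \eqref{hd} to rewrite $\phi^{n-k+2}\phi'^{k-2}=\phi^{n-k}\phi'^{k-2}-K\phi^{n-k}\phi'^{k-2}\cdot(\phi'^2)\cdot\ldots$, or equivalently $\phi^{n-k+2}\phi'^{k-2}=\phi^{n-k}\phi'^{k-2}-K^{-1}(\phi^{n-k}\phi'^k-\phi^{n-k}\phi'^{k-2})$, in order to collapse the three species of $\phi^{a}\phi'^{b}$ terms into the two appearing in \eqref{akob}.

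The main obstacle is this last coefficient-matching for $u^2$: one must track the three sources (the $\binom{n}{k}\frac{(n-k)(n-k-1)}{2}\phi^{n-k-2}\phi'^{k+2}$ term, the $K(k^2-kn-n/2)\phi^{n-k}\phi'^k$ term, and the $\frac{k(k-1)}{2}\phi^{n-k+2}\phi'^{k-2}$ term from \eqref{ak111}) together with the $K\frac{n-k+1}{k-1}$-scaled $u^2$-coefficient from the inductive hypothesis at level $k-2$, and apply \eqref{hd} together with the binomial relation $\binom{n}{k-2}\frac{(n-k+1)(n-k+2)}{k-1}=k\binom{n}{k}$ to collapse everything into the stated $-K\binom{n}{k}\frac{(n-k)(k+1)}{2}\phi^{n-k}\phi'^k$ coefficient. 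The $O(\varepsilon)\|u\|_{L^2}^2+O(\varepsilon)\|\nabla u\|_{L^2}^2$ error carries through the recursion trivially since the prefactor $K\frac{n-k+1}{k-1}$ is bounded.
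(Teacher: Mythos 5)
Your proposal is correct and is essentially the paper's own argument: the paper likewise takes $k=0,1$ as base cases via \eqref{a0ob} and \eqref{a1ob} and then notes that the claimed formula \eqref{akob} is consistent with the recursion \eqref{akomega}, concluding by induction, so your coefficient check --- resting on the identity $\binom{n}{k-2}\tfrac{(n-k+1)(n-k+2)}{k-1}=k\binom{n}{k}$ --- is exactly the ``easy to see'' step the paper omits. One small simplification: the $u^2$ coefficient does not in fact need \eqref{hd}, since the $\phi^{n-k+2}(\rho)\phi'^{k-2}(\rho)$ contributions from \eqref{ak111} and from the level-$(k-2)$ inductive hypothesis cancel outright (using $K^2=1$), and the remaining $\phi^{n-k}(\rho)\phi'^{k}(\rho)$ terms combine directly to the stated $-K\binom{n}{k}\tfrac{(n-k)(k+1)}{2}$.
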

\begin{proof}
	By \eqref{a-1ob}, \eqref{a0ob} and \eqref{a1ob}, we have already checked the formulae for \eqref{akob} with $k=0,1$. It's easy to see that \eqref{akob} also satisfies \eqref{akomega}. Then by induction, we conclude our assertion.
\end{proof}

\section{Stability of Alexandrov-Fenchel inequalities}
\label{section4}
In this section, we prove Theorem \ref{mainresult}. 
For any fixed $0\leqslant k\leqslant n-1$, assuming $\mathscr{A}_j(\Omega)=\mathscr{A}_{j}(\overline{B}_{\rho})\ (-1\leqslant j < k)$ and $\mathrm{bar}(\Omega)=O$, we will get a Poincar\'e-type estimate for our proof. Before that, the following estimate which was proved in \cite{Bgelein2015ASQ,Bgelein2016AQI} is crucial. For the convenience of readers, we present it here:

\begin{lem}[\cite{Bgelein2015ASQ,Bgelein2016AQI}]
	\label{a_1^2-lem}
	Let $\Omega$ be a nearly spherical domain enclosed by $M=\{(\rho(1+u(x)),x):x\in\mathbb{S}^n\}$ in $\mathbb{N}^{n+1}(K)\ (K=-1,1)$, and suppose $u$ can be represented as
	\begin{equation*}
		u=\sum\limits_{k=0}^{\infty} a_kY_k,
	\end{equation*}
	where $\{Y_k\}_{k=0}^{\infty}$ corresponds the spherical harmonics which forms an orthonormal basis for $L^2(\mathbb{S}^n)$. If $$\mathrm{bar}(\Omega) = O,$$
	then
	\begin{equation}
		a_1^2 = O(\varepsilon)\|u\|_{L^2(\mathbb{S}^n)}^2.  \label{a1est}
	\end{equation}  
\end{lem}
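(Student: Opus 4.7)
The plan is to deduce (\ref{a1est}) from the first-order optimality condition for the barycenter functional. By Definition \ref{defofbary}, the map $p \mapsto I(p) := \int_\Omega d_K^2(y,p)\, d\mu_K(y)$ attains its minimum at $p = O$, so $dI|_O = 0$. Using $\nabla_p d_K^2(y,p) = -2\exp_p^{-1}(y)$ (which holds for $y$ within the injectivity radius of $p$, hence certainly here since $\Omega$ is a small perturbation of $\overline{B}_\rho$ and $\|u\|_{W^{2,\infty}} < \varepsilon$), the Euler--Lagrange equation reads
\begin{equation*}
\int_\Omega \exp_O^{-1}(y)\, d\mu_K(y) = 0
\end{equation*}
as a vector identity in $T_O \mathbb{N}^{n+1}(K) \cong \mathbb{R}^{n+1}$.

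Next I would rewrite this in the geodesic polar coordinates introduced in Section \ref{section2}. A point with polar representation $(r,x) \in [0,+\infty) \times \mathbb{S}^n$ satisfies $\exp_O^{-1}((r,x)) = rx$ (viewing $x$ as a unit vector in $T_O$), and the volume element is $\phi^n(r)\, dr\, dA(x)$. Setting $F(t) := \int_0^t r\phi^n(r)\, dr$, so $F'(t) = t\phi^n(t)$, the above condition becomes
\begin{equation*}
\int_{\mathbb{S}^n} F\bigl(\rho(1+u(x))\bigr)\, x\, dA(x) = 0.
\end{equation*}

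I would then Taylor-expand $F$ around $\rho$ and exploit the fact that $\int_{\mathbb{S}^n} x\, dA = 0$, so the constant term drops. This yields
\begin{equation*}
\rho^2\phi^n(\rho) \int_{\mathbb{S}^n} u(x)\, x\, dA(x) = -\int_{\mathbb{S}^n} \bigl[\tfrac{1}{2}F''(\rho)\rho^2 u^2 + R(u,x)\bigr]\, x\, dA(x),
\end{equation*}
with a Taylor remainder $R(u,x) = O(|u|^3)$ uniformly in $x$, using $\|u\|_{W^{2,\infty}} < \varepsilon$ and smoothness of $\phi$. Bounding the right-hand side componentwise by $C\|u\|_{L^2(\mathbb{S}^n)}^2$ and dividing by the positive constant $\rho^2\phi^n(\rho)$, I obtain $\bigl|\int_{\mathbb{S}^n} u\, x_i\, dA\bigr| \leq C\|u\|_{L^2(\mathbb{S}^n)}^2$ for each coordinate $i$.

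Finally, since the coordinate functions $x_i$ span (up to normalization) the space of first-order spherical harmonics, summing these squared componentwise bounds and invoking Parseval gives $a_1^2 \leq C\|u\|_{L^2(\mathbb{S}^n)}^4$. Combining with the trivial bound $\|u\|_{L^2(\mathbb{S}^n)}^2 \leq |\mathbb{S}^n|\,\|u\|_{L^\infty(\mathbb{S}^n)}^2 \leq C\varepsilon^2$ yields $a_1^2 \leq C\varepsilon^2\|u\|_{L^2(\mathbb{S}^n)}^2 = O(\varepsilon)\|u\|_{L^2(\mathbb{S}^n)}^2$, which is the desired estimate. The only delicate point is justifying the first variation formula $\nabla_p d_K^2(y,p) = -2\exp_p^{-1}(y)$ in the curved setting; this is standard provided $y$ lies in the injectivity domain of $\exp_O$, which is automatic here (for $K=1$ one uses that $\rho < \pi/2$ and $\varepsilon$ is small enough to keep $M$ well inside the upper hemisphere).
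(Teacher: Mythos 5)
Your proposal is correct and follows essentially the same route as the paper: both reduce $\mathrm{bar}(\Omega)=O$ to the first-variation identity $\int_\Omega \exp_O^{-1}(y)\,\mathrm{d}\mu_K(y)=0$, rewrite it in polar coordinates as the vanishing of $\int_{\mathbb{S}^n} F(\rho(1+u))\,x\,\mathrm{d}A$ with $F(t)=\int_0^t r\phi^n(r)\,\mathrm{d}r$, Taylor-expand about $u=0$ using $\int_{\mathbb{S}^n}x\,\mathrm{d}A=0$, and conclude $\bigl|\int_{\mathbb{S}^n}ux_l\,\mathrm{d}A\bigr|=O(\varepsilon)\|u\|_{L^2(\mathbb{S}^n)}$. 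The only cosmetic differences are that the paper first changes variables in the radial integral and truncates the expansion at first order with an $O(\varepsilon)\|u\|_{L^2}$ remainder, whereas you retain the quadratic term explicitly and obtain the (slightly sharper, equivalent) bound $a_1^2=O(\|u\|_{L^2}^4)$.
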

\begin{proof}
	Note that the eigenvalues of the Laplace-Beltrami operator $\Delta$ on\ $\mathbb{S}^n$ are\ $\lambda_k=-k(n+k-1)\ (k=0,1,\cdots)$. Let\ $\{Y_k\}_{k=0,1,\cdots}$ be the eigenfunction corresponding to $\lambda_k$ which constitutes an orthonormal basis in\ $L^2(\mathbb{S}^n)$. We can take
	\begin{equation}
		Y_0=\dfrac{1}{\sqrt{\mathrm{Area}(\mathbb{S}^n)}},\ \ Y_1=\dfrac{\sqrt{n+1}}{\sqrt{\mathrm{Area}(\mathbb{S}^n)}} x\cdot v,
		\label{y0y1}
	\end{equation}
	where $v=(v_1,v_2,\cdots,v_{n+1})\in\mathbb{S}^n$, $x\cdot v=\sum\limits_{l=1}^{n+1}x_lv_l$. 
	
	From Definition \ref{defofbary}, $\mathrm{bar}(\Omega)=O$ is characterized by the necessary condition
	\begin{equation}
		0 = \int_{\Omega} \left. \nabla^{\mathbb{N}^{n+1}(K)}_p \left( d_K^2(y,p) \right) \right|_{p=O} \mathrm{d}\mu_K(y)=\int_{\Omega} \left. 2 d_K(y,O) \nabla^{\mathbb{N}^{n+1}(K)}_p[d_K(y,p)]\right|_{p=O} \mathrm{d}\mu_K(y).
	\end{equation}
	By using the warped product metric for $\mathbb{N}^{n+1}(K)\ (K=-1,1)$, we have
	\begin{equation}
		\int_0^{\rho} \int_{\mathbb{S}^n} r(1+u(x))^2\phi^n(r(1+u(x))) x_l \mathrm{d}A\mathrm{d}r=0,\ \ l=1,2,\cdots,n+1. \label{barycenter}
	\end{equation}
	By Taylor expansion of $r(1+u)^2\phi^n(r(1+u))$ at $u=0$, we have
	\begin{equation*}
		r(1+u)^2\phi^n(r(1+u)) = r\phi^n(r) + \dfrac{\mathrm{d}}{\mathrm{d}r}\left( r^2\phi^n(r) \right) u + o(u), 
	\end{equation*}
	then insert it into (\ref{barycenter}) and use $\|u\|_{W^{2,\infty}(\mathbb{S}^n)}<\varepsilon$, we get that for any $1\leqslant l\leqslant n+1$,
	\begin{equation}
		0 = \int_0^{\rho} r\phi^n(r) \mathrm{d}r \int_{\mathbb{S}^n}  x_l \mathrm{d}A + \rho^2\phi^n(\rho) \int_{\mathbb{S}^n} ux_l \mathrm{d}A + O(\varepsilon)\|u\|_{L^2(\mathbb{S}^n)}. \label{baromega}
	\end{equation}
	Note that $\mathrm{bar}(\overline{B}_{\rho})=O$, by taking $u=0$ in (\ref{barycenter}) we have
	\begin{equation}
		\int_{\mathbb{S}^n} x_l \mathrm{d}A=0,\ \ l=1,2,\cdots,n+1. \label{barb}
	\end{equation}
	Combining (\ref{baromega}) with (\ref{barb}), we know that
	\begin{equation}
		\int_{\mathbb{S}^n} ux_l \mathrm{d}A = O(\varepsilon)\|u\|_{L^2(\mathbb{S}^n)},\ \ l=1,2,\cdots,n+1.
	\end{equation}
	Consequently, by (\ref{y0y1}),
	\begin{equation}
		a_1=\int_{\mathbb{S}^n} uY_1 \mathrm{d}A = \dfrac{\sqrt{n+1}}{\sqrt{\mathrm{Area}(\mathbb{S}^n)}} \sum\limits_{l=1}^{n+1} v_l \int_{\mathbb{S}^n} ux_l \mathrm{d}A = O(\varepsilon) \|u\|_{L^2(\mathbb{S}^n)}.
	\end{equation}
\end{proof}
\begin{lem}[Poincar\'e-type Estimate]
	\label{lemma4.1}
	Let $M=\{(\rho(1+u(x)),x):x\in\mathbb{S}^n\}$, where $u\in C^3(\mathbb{S}^n)$, and there exists small $\varepsilon>0$ such that $\|u\|_{W^{2,\infty}(\mathbb{S}^n)}<\varepsilon$. Let $\Omega$ be the domain enclosed by $M$ in $\mathbb{N}^{n+1}(K)\ (K=-1,1)$ satisfying $\mathrm{bar}(\Omega)=O$.
	\begin{enumerate}[label={(\arabic*)},parsep=0cm,itemsep=0cm,topsep=0cm]
		\item If $\mathscr{A}_{-1}(\Omega) = \mathscr{A}_{-1}(\overline{B}_{\rho})$, then
		\begin{equation}
			\|\nabla u\|_{L^2(\mathbb{S}^n)}^2\geqslant 2(n+1)\|u\|_{L^2(\mathbb{S}^n)}^2 + O(\varepsilon)\|u\|_{L^2(\mathbb{S}^n)}^2.  \label{egenest}
		\end{equation}
		\item If $\mathscr{A}_{j}(\Omega) = \mathscr{A}_{j}(\overline{B}_{\rho})$, for any $0\leqslant j< k$, where $1\leqslant k\leqslant n$, then
		\begin{equation}
			\|\nabla u\|_{L^2(\mathbb{S}^n)}^2\geqslant 2(n+1)\|u\|_{L^2(\mathbb{S}^n)}^2 + O(\varepsilon)\|u\|_{L^2(\mathbb{S}^n)}^2+O(\varepsilon)\|\nabla u\|_{L^2(\mathbb{S}^n)}^2. \label{egenest22}
		\end{equation}
	\end{enumerate}
\end{lem}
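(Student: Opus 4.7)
The plan is to combine the spectral decomposition of $u$ on $\mathbb{S}^n$ with the quantitative expansion of the quermassintegral identities derived in the previous section and the barycenter estimate of Lemma \ref{a_1^2-lem}. Expand $u=\sum_{k=0}^{\infty}a_kY_k$ where $\{Y_k\}$ is an orthonormal $L^2(\mathbb{S}^n)$-basis of eigenfunctions with $-\Delta Y_k=k(n+k-1)Y_k$. Parseval gives
\begin{equation*}
\|u\|_{L^2(\mathbb{S}^n)}^2=\sum_{k\geqslant 0}a_k^2,\qquad \|\nabla u\|_{L^2(\mathbb{S}^n)}^2=\sum_{k\geqslant 0}k(n+k-1)\,a_k^2,
\end{equation*}
so
\begin{equation*}
\|\nabla u\|_{L^2(\mathbb{S}^n)}^2-2(n+1)\|u\|_{L^2(\mathbb{S}^n)}^2=\sum_{k\geqslant 0}\bigl[k(n+k-1)-2(n+1)\bigr]a_k^2.
\end{equation*}
The bracket equals $-2(n+1)$ at $k=0$, $-(n+2)$ at $k=1$, vanishes at $k=2$, and is strictly positive for $k\geqslant 3$. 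Therefore the proof reduces to showing that $a_0^2$ and $a_1^2$ are both small (of order $\varepsilon$) relative to the right-hand side, so that the nonnegative tail $\sum_{k\geqslant 3}[k(n+k-1)-2(n+1)]a_k^2$ absorbs everything.

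The control of $a_1^2$ is immediate from Lemma \ref{a_1^2-lem}: the barycenter hypothesis $\mathrm{bar}(\Omega)=O$ already gives $a_1^2=O(\varepsilon)\|u\|_{L^2(\mathbb{S}^n)}^2$ in both cases. For $a_0$, I use the normalization of the quermassintegral. In case (1), identity \eqref{a-1ob} combined with $\int_{\mathbb{S}^n}u\,\mathrm{d}A=\sqrt{\mathrm{Area}(\mathbb{S}^n)}\,a_0$ and $\mathscr{A}_{-1}(\Omega)=\mathscr{A}_{-1}(\overline{B}_\rho)$ yields
\begin{equation*}
\phi^n(\rho)\rho\sqrt{\mathrm{Area}(\mathbb{S}^n)}\,a_0=-\tfrac{n}{2}\phi^{n-1}(\rho)\phi'(\rho)\rho^2\|u\|_{L^2(\mathbb{S}^n)}^2+O(\varepsilon)\|u\|_{L^2(\mathbb{S}^n)}^2.
\end{equation*}
Hence $|a_0|\leqslant C\|u\|_{L^2(\mathbb{S}^n)}^2\leqslant C\varepsilon\|u\|_{L^2(\mathbb{S}^n)}$ (using $\|u\|_{L^\infty(\mathbb{S}^n)}<\varepsilon$), so $a_0^2=O(\varepsilon)\|u\|_{L^2(\mathbb{S}^n)}^2$. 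Substituting back into the spectral identity and dropping the nonnegative high-mode tail gives \eqref{egenest}.

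For case (2), apply identity \eqref{akob} with the running index replaced by $j$. Since $0\leqslant j<k\leqslant n$ the coefficient ${n\choose j}(n-j)\phi^{n-j-1}(\rho)\phi'^{j+1}(\rho)$ of the linear-in-$u$ term is strictly positive, so the constraint $\mathscr{A}_j(\Omega)=\mathscr{A}_j(\overline{B}_\rho)$ gives
\begin{equation*}
|a_0|\leqslant C\bigl(\|u\|_{L^2(\mathbb{S}^n)}^2+\|\nabla u\|_{L^2(\mathbb{S}^n)}^2\bigr),
\end{equation*}
and the uniform bound $\|u\|_{L^2(\mathbb{S}^n)}\leqslant C\varepsilon$ upgrades this to $a_0^2=O(\varepsilon)\bigl(\|u\|_{L^2(\mathbb{S}^n)}^2+\|\nabla u\|_{L^2(\mathbb{S}^n)}^2\bigr)$. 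Inserting the bounds on $a_0^2$, $a_1^2$ into the spectral identity yields \eqref{egenest22}.

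The main technical point, and really the only place where care is needed, is the case-(2) step: because the Taylor expansion \eqref{akob} contains both $u^2$ and $|\nabla u|^2$ in its quadratic part, the bound on $a_0$ unavoidably couples the two norms, which is precisely why \eqref{egenest22} must carry the extra $O(\varepsilon)\|\nabla u\|_{L^2(\mathbb{S}^n)}^2$ term that is absent in \eqref{egenest}. Everything else is Parseval bookkeeping together with the positivity of the eigenvalues for modes $k\geqslant 2$.
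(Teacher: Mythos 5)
Your proposal is correct and follows essentially the same route as the paper: Parseval plus the spectral gap at modes $k\geqslant 2$, the barycenter lemma for $a_1^2$, and the linear term of the quermassintegral expansion \eqref{a-1ob} (resp. \eqref{akob}) to control $a_0$, with the $\|\nabla u\|_{L^2(\mathbb{S}^n)}^2$ contamination in case (2) arising exactly as you describe. No gaps.
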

\begin{proof}
	Consider the Fourier expansion $u=\sum\limits_{k=0}^{\infty}a_kY_k$ as in Lemma \ref{a_1^2-lem}, 
	then
	$$\|u\|_{L^2(\mathbb{S}^n)}^2=\sum\limits_{k=0}^{\infty}a_k^2,\ \ \|\nabla u\|_{L^2(\mathbb{S}^n)}^2 = \sum\limits_{k=0}^{\infty} |\lambda_k| a_k^2.$$
	
	When\ $k\geqslant 2$, we have that $|\lambda_k|=k(n+k-1)\geqslant 2(n+1)$, then
	\begin{eqnarray}
		\|\nabla u\|_{L^2(\mathbb{S}^n)}^2 &=& \sum\limits_{k=0}^{\infty} |\lambda_k| a_k^2=\sum\limits_{k=2}^{\infty} |\lambda_k| a_k^2 + na_1^2 \geqslant 2(n+1)\sum\limits_{k=2}^{\infty} a_k^2 +na_1^2 \notag\\
		&=& 2(n+1)\sum\limits_{k=0}^{\infty} a_k^2 - (n+2) a_1^2 - 2(n+1) a_0^2 \notag\\
		&=& 2(n+1) \|u\|_{L^2(\mathbb{S}^n)}^2  - (n+2) a_1^2 - 2(n+1) a_0^2 \label{nablauest}
	\end{eqnarray}
	
	Now $a_1^2$ has been estimated in Lemma \ref{a_1^2-lem}. Note that $a_0= \dfrac{1}{\sqrt{\mathrm{Area}(\mathbb{S}^n)}}\displaystyle\int_{\mathbb{S}^n} u \mathrm{d}A$, 
	we estimate $a_0^2$ in two cases.
	
	(1) Assume that\ $\mathrm{Vol}(\Omega)=\mathrm{Vol}(\overline{B}_{\rho})$, by (\ref{a-1ob}) we get
	\begin{equation}
		\int_{\mathbb{S}^n} u\mathrm{d}A = -\dfrac{n}{2}\dfrac{\phi'(\rho)}{\phi(\rho)}\rho \int_{\mathbb{S}^n} u^2 \mathrm{d}A + O(\varepsilon) \|u\|_{L^2(\mathbb{S}^n)}^2,  \label{volumeq}
	\end{equation}
	therefore, we get
	\begin{equation}
		a_0^2=O(\varepsilon)\|u\|_{L^2(\mathbb{S}^n)}^2.  \label{a0est-1}
	\end{equation}
	
	(2)  Assume that $\mathscr{A}_j(\Omega) = \mathscr{A}_j(\overline{B}_{\rho})\ (0\leqslant j<k)$, by (\ref{akob}), we have
	\begin{eqnarray}
		\int_{\mathbb{S}^n} u \mathrm{d}A &=& -\int_{\mathbb{S}^n} \left[\dfrac{n-j-1}{2}\dfrac{\phi'(\rho)}{\phi(\rho)} -K\dfrac{j+1}{2} \dfrac{\phi(\rho)}{\phi'(\rho)}  \right]\rho u^2 \mathrm{d}A\notag\\
		&&-\int_{\mathbb{S}^n} \dfrac{j+1}{2n}\dfrac{1}{\phi(\rho)\phi'(\rho)}\rho |\nabla u|^2 \mathrm{d}A + O(\varepsilon)\|u\|_{L^2(\mathbb{S}^n)}^2  + O(\varepsilon)\|\nabla u\|_{L^2(\mathbb{S}^n)}^2. \label{ujjj}
	\end{eqnarray}
	Thus we have
	\begin{equation}
		a_0^2=O(\varepsilon)\|u\|_{L^2(\mathbb{S}^n)}^2+O(\varepsilon)\|\nabla u\|_{L^2(\mathbb{S}^n)}^2.  \label{a0estj}
	\end{equation}  
	
	Combine (\ref{nablauest}), (\ref{a1est}), (\ref{a0est-1}) and (\ref{a0estj}), we get the estimate (\ref{egenest}) and (\ref{egenest22}).
\end{proof}

Now, Let's estimate $\alpha^2(\Omega)$ by definition.
\begin{lem}[Estimation for $\alpha^2(\Omega)$]
	Let $\Omega$ be a nearly spherical domain in $\mathbb{N}^{n+1}(K)\ (K=-1,1)$ which is enclosed by $M=\{ (\rho(1+u(x)),x): x\in\mathbb{S}^n \}$. Suppose $u\in C^3(\mathbb{S}^n)$ and there exists $\varepsilon>0$ such that $\|u\|_{W^{2,\infty}(\mathbb{S}^n)}<\varepsilon$, then
	\begin{equation}
		\alpha^2(\Omega) \leqslant \dfrac{1}{n^2} \mathrm{Area}(\mathbb{S}^n)\phi^{2n}(\rho) \rho^2 \|\nabla u\|_{L^2(\mathbb{S}^n)}^2 + O(\varepsilon)\|\nabla u\|_{L^2(\mathbb{S}^n)}^2.
	\end{equation} 
\end{lem}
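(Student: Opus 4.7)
The plan is to bound $\alpha(\Omega)$ from above by evaluating the symmetric difference for one convenient competitor in the infimum of Definition \ref{asymmetry}. Specifically, let $\rho'>0$ be the unique radius with $\mathrm{Vol}(\overline{B}_{\rho'}(O))=\mathrm{Vol}(\Omega)$. Since $\|u\|_{W^{2,\infty}(\mathbb{S}^n)}<\varepsilon$, we have $\rho'=\rho+O(\varepsilon)$, and the concentric ball $\overline{B}_{\rho'}(O)$ is admissible, so
\begin{equation*}
\alpha(\Omega)\leqslant\mathrm{Vol}\bigl(\Omega\,\Delta\,\overline{B}_{\rho'}(O)\bigr)=\int_{\mathbb{S}^n}\biggl|\int_{\rho'}^{\rho(1+u(x))}\phi^n(r)\,\mathrm{d}r\biggr|\,\mathrm{d}A.
\end{equation*}

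Next I would Taylor-expand the inner integral around $r=\rho$. Writing $v(x)=\rho(1+u(x))-\rho'$, one gets $\int_{\rho'}^{\rho(1+u)}\phi^n\,\mathrm{d}r=\phi^n(\rho)\,v(x)+\tfrac{n}{2}\phi^{n-1}(\rho)\phi'(\rho)\,v(x)^2+O(\varepsilon^3)$. Integrating this identity over $\mathbb{S}^n$ and invoking the volume-matching condition pins down the mean of $v$: to leading order, $\rho'-\rho=\rho\,\bar u+O(\varepsilon^2)$, where $\bar u=\mathrm{Area}(\mathbb{S}^n)^{-1}\int_{\mathbb{S}^n} u\,\mathrm{d}A$. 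Hence $v(x)=\rho(u(x)-\bar u)+O(\varepsilon^2)$, and
\begin{equation*}
\alpha(\Omega)\leqslant \phi^n(\rho)\,\rho\int_{\mathbb{S}^n}|u-\bar u|\,\mathrm{d}A+\bigl(\text{lower-order remainder}\bigr).
\end{equation*}

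To convert this $L^1$-type bound into an $L^2$-bound in $\nabla u$, I would apply Cauchy--Schwarz to obtain $\int_{\mathbb{S}^n}|u-\bar u|\,\mathrm{d}A\leqslant\sqrt{\mathrm{Area}(\mathbb{S}^n)}\,\|u-\bar u\|_{L^2(\mathbb{S}^n)}$, and then the Poincar\'e--Wirtinger inequality on $\mathbb{S}^n$ (whose first nonzero eigenvalue of $-\Delta_{\mathbb{S}^n}$ equals $n$) to obtain $\|u-\bar u\|_{L^2(\mathbb{S}^n)}^2\leqslant n^{-1}\|\nabla u\|_{L^2(\mathbb{S}^n)}^2$. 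Squaring the resulting chain produces a bound of the advertised shape with prefactor $\mathrm{Area}(\mathbb{S}^n)\,\phi^{2n}(\rho)\,\rho^2$ times an explicit inverse power of $n$ arising from the Poincar\'e constant.

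The main obstacle is the careful accounting of the error terms. One must verify that every correction---the higher-order Taylor term $\tfrac{n}{2}\phi^{n-1}(\rho)\phi'(\rho)\,v(x)^2$ in the inner integral, the $O(\varepsilon^2)$ piece inside $\rho'-\rho$, and the cross-terms produced upon squaring $\alpha(\Omega)$---is absorbed into the $O(\varepsilon)\|\nabla u\|_{L^2(\mathbb{S}^n)}^2$ remainder. Typically this is done by pulling a pointwise factor $|u-\bar u|\leqslant 2\varepsilon$ out of every quadratic expression in $(u-\bar u)$ and then re-applying Poincar\'e--Wirtinger to $\|u-\bar u\|_{L^2}^2$; it is this last step that ensures the bound is consistent with the trivial case $u\equiv\mathrm{const.}$, where both sides vanish identically.
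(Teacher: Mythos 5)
Your strategy coincides with the paper's: both take the volume-matched concentric geodesic ball $\overline{B}_{\rho'}(O)$ (the paper calls it $\overline{B}_{\Omega}$) as the competitor in \eqref{F-Asym-rescale}, rewrite $\mathrm{Vol}(\Omega\Delta\overline{B}_{\rho'})$ as the $L^1(\mathbb{S}^n)$-norm of the deviation of $F(x)=\int_0^{\rho(1+u(x))}\phi^n(r)\,\mathrm{d}r$ from its spherical mean $\bar F$, pass to $L^2$ by Cauchy--Schwarz, and finish with a Poincar\'e inequality. The only structural difference is where Poincar\'e is applied: the paper applies it directly to the composite function $F$, whose gradient is exactly $\phi^n(\rho(1+u))\,\rho\nabla u$, so no Taylor expansion of the radial integral and no tracking of the shift $\rho'-\rho-\rho\bar u$ is needed; you expand first and apply Poincar\'e--Wirtinger to $u$ itself, which is what forces the error bookkeeping you describe in your last paragraph. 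Both routes are sound and produce the same leading prefactor, so this difference is cosmetic.

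There is, however, one quantitative point you left open (``an explicit inverse power of $n$''), and it must be settled because it is exactly where the stated constant comes from. With the constant you quote, $\|u-\bar u\|_{L^2(\mathbb{S}^n)}^2\leqslant n^{-1}\|\nabla u\|_{L^2(\mathbb{S}^n)}^2$ (spectral gap $\lambda_1=n$), your chain yields
\begin{equation*}
	\alpha^2(\Omega)\leqslant\dfrac{1}{n}\,\mathrm{Area}(\mathbb{S}^n)\,\phi^{2n}(\rho)\,\rho^2\,\|\nabla u\|_{L^2(\mathbb{S}^n)}^2+O(\varepsilon)\|\nabla u\|_{L^2(\mathbb{S}^n)}^2,
\end{equation*}
i.e.\ the prefactor $n^{-1}$ rather than the stated $n^{-2}$. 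The paper reaches $n^{-2}$ by writing $\|F-\bar F\|_{L^2}\leqslant\frac{1}{n}\|\nabla F\|_{L^2}$ for the unsquared norms, but the spectral gap only gives $\|F-\bar F\|_{L^2}\leqslant\frac{1}{\sqrt{n}}\|\nabla F\|_{L^2}$, and the version with $\frac{1}{n}$ fails for a first spherical harmonic (where equality holds with constant $n^{-1/2}$); since this lemma assumes neither a barycenter nor a volume normalization, the first harmonic cannot be discarded here. So, as written, your argument proves the inequality only with the weaker constant $n^{-1}$ in place of $n^{-2}$; to get the stated bound you would need to justify the sharper Poincar\'e step, and otherwise the larger constant has to be propagated through the proof of Theorem \ref{mainresult}.
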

\begin{proof}
	Let $\overline{B}_{\Omega}$ be a geodesic ball with geodesic radius $R$ centered at $O$ in $\mathbb{N}^{n+1}(K)\ (K=-1,1)$ such that
	\begin{equation}
		\mathrm{Vol}(\Omega) = \mathrm{Vol}(\overline{B}_{\Omega}) = \mathrm{Area(\mathbb{S}^n)}\int_{0}^R \phi^n(r)\mathrm{d}r.
	\end{equation}
	Thus,
	\begin{eqnarray}
		&&\mathrm{Vol}(\Omega\Delta\overline{B}_{\Omega}) \notag\\
		&=& \int_{\mathbb{S}^n} \left| \int_0^{\rho(1+u)}\phi^n(r)\mathrm{d}r -\int_0^R \phi^n(r) \mathrm{d}r \right| \mathrm{d}A  \notag\\
		&=& \int_{\mathbb{S}^n} \left| \int_0^{\rho(1+u)}\phi^n(r)\mathrm{d}r -\dfrac{1}{\mathrm{Area(\mathbb{S}^n)}}\int_{\mathbb{S}^n}\left(\int_0^{\rho(1+u)} \phi^n(r) \mathrm{d}r\right) \mathrm{d}A   \right| \mathrm{d}A  \notag\\
		&=& \left\| \int_0^{\rho(1+u)}\phi^n(r)\mathrm{d}r -\dfrac{1}{\mathrm{Area(\mathbb{S}^n)}}\int_{\mathbb{S}^n}\left(\int_0^{\rho(1+u)} \phi^n(r) \mathrm{d}r\right) \mathrm{d}A \right\|_{L^1(\mathbb{S}^n)}\notag\\
		&\leqslant& \left( \mathrm{Area}(\mathbb{S}^n) \right)^{\frac{1}{2}} \left\| \int_0^{\rho(1+u)}\phi^n(r)\mathrm{d}r -\dfrac{1}{\mathrm{Area(\mathbb{S}^n)}}\int_{\mathbb{S}^n}\left(\int_0^{\rho(1+u)} \phi^n(r) \mathrm{d}r\right) \mathrm{d}A \right\|_{L^2(\mathbb{S}^n)}, \label{volarea}
	\end{eqnarray}
	where in (\ref{volarea}) we used Hölder's inequality. By Poincaré's inequality,
	\begin{eqnarray}
		&&\left\| \int_0^{\rho(1+u)}\phi^n(r)\mathrm{d}r -\dfrac{1}{\mathrm{Area(\mathbb{S}^n)}}\int_{\mathbb{S}^n}\left(\int_0^{\rho(1+u)} \phi^n(r) \mathrm{d}r\right) \mathrm{d}A \right\|_{L^2(\mathbb{S}^n)}   \notag\\
		&\leqslant& \dfrac{1}{n} \left\| \nabla \left( \int_0^{\rho(1+u)}\phi^n(r)\mathrm{d}r \right) \right\|_{L^2(\mathbb{S}^n)}=\dfrac{1}{n} \left\| \phi^{n}(\rho(1+u)) \rho \nabla u  \right\|_{L^2(\mathbb{S}^n)} \notag\\
		&\leqslant& \dfrac{1}{n} \rho \| \phi^n(\rho(1+u)) \|_{L^{\infty}(\mathbb{S}^n)} \| \nabla u \|_{L^2(\mathbb{S}^n)}.
	\end{eqnarray}
	Thus
	\begin{equation}
		\mathrm{Vol}(\Omega\Delta\overline{B}_{\Omega}) \leqslant \dfrac{1}{n} \rho  \left( \mathrm{Area}(\mathbb{S}^n) \right)^{\frac{1}{2}} \| \phi^n(\rho(1+u)) \|_{L^{\infty}(\mathbb{S}^n)}\|\nabla u\|_{L^2(\mathbb{S}^n)}.
	\end{equation}
	By Taylor expansion of $\phi^n(\rho(1+u))$ at $u=0$,
	\begin{eqnarray}
		\phi^n(\rho(1+u))&=&\phi^n(\rho) + n\phi^{n-1}(\rho)\phi'(\rho) \rho u \notag\\
		&&+ \dfrac{1}{2}\left[ n(n-1)\phi^{n-2}(\rho)\phi'^2(\rho)-Kn\phi^{n}(\rho) \right]\rho^2 u^2 +o(u^2),
	\end{eqnarray}
	and notice the condition $\|u\|_{C^2(\mathbb{S}^n)}<\varepsilon$, we have
	\begin{equation}
		\dfrac{\|\phi^n (\rho(1+u)) \|_{L^{\infty}(\mathbb{S}^n)}}{\phi^n(\rho)} = 1+ O(\varepsilon).  \label{phioe}
	\end{equation}
	Therefore,
	\begin{equation}
		\mathrm{Vol}(\Omega\Delta\overline{B}_{\Omega}) \leqslant \left(  \dfrac{1}{n}\rho \left( \mathrm{Area}(\mathbb{S}^n) \right)^{\frac{1}{2}} \phi^n(\rho) + O(\varepsilon)  \right) \| \nabla u\|_{L^2(\mathbb{S}^n)}.
	\end{equation}
	Thus
	\begin{equation}
		\alpha^2(\Omega) \leqslant \mathrm{Vol}(\Omega\Delta\overline{B}_{\Omega})^2 \leqslant \dfrac{1}{n^2}\rho^2 \mathrm{Area}(\mathbb{S}^n)  \phi^{2n}(\rho)  \| \nabla u\|_{L^2(\mathbb{S}^n)}^2 + O(\varepsilon) \| \nabla u\|_{L^2(\mathbb{S}^n)}^2.
	\end{equation}
	We get the conclusion.
\end{proof}
From this lemma, we know that
\begin{equation}
	\|\nabla u\|_{L^2 (\mathbb{S}^n)}^2 \geqslant \left( \dfrac{n^2}{\mathrm{Area}(\mathbb{S}^n)\phi^{2n}(\rho)\rho^2}+O(\varepsilon) \right) \alpha^2(\Omega).  \label{nablau}
\end{equation}

Now we prove Theorem \ref{mainresult}. 
\begin{proof}[Proof of Theorem \ref{mainresult}]
	By $\mathscr{A}_j(\Omega) = \mathscr{A}_j(\overline{B}_{\rho})\ (\mathrm{fixed}-1\leqslant j<k)$, that is when $j=-1$, substitute (\ref{volumeq}) into (\ref{akob}), and when $0\leqslant j<k$, substitute (\ref{ujjj}) into (\ref{akob}), after a direct computation and use (\ref{hd}) we have
	\begin{eqnarray}
		\mathscr{A}_k(\Omega) - \mathscr{A}_k(\overline{B}_{\rho}) &=& -\int_{\mathbb{S}^n} {n\choose k}\dfrac{(n-k)(k-j)}{2}\phi^{n-k-2}(\rho)\phi'^k(\rho) \rho^2 u^2 \mathrm{d}A\notag \\
		&&+ \int_{\mathbb{S}^n} {n\choose k} \dfrac{(n-k)(k-j)}{2n}\phi^{n-k-2}(\rho)\phi'^k(\rho) \rho^2|\nabla u|^2 \mathrm{d}A\notag\\
		&&+O(\varepsilon)\|u\|_{L^2(\mathbb{S}^n)}^2 + O(\varepsilon)\|\nabla u\|_{L^2(\mathbb{S}^n)}^2.
	\end{eqnarray}
	
	Note that the coefficient of $\displaystyle\int_{\mathbb{S}^n} |\nabla u|^2 \mathrm{d}A$ is positive, by estimate (\ref{egenest}) and (\ref{egenest22}) for $\|\nabla u\|_{L^2(\mathbb{S}^n)}^2$, we have
	\begin{eqnarray}
		\mathscr{A}_k(\Omega) - \mathscr{A}_k(\overline{B}_{\rho}) &\geqslant& \left({n\choose k}\dfrac{(n-k)(k-j)}{4n}\phi^{n-k-2}(\rho)\phi'^{k}(\rho)\rho^2+O(\varepsilon)\right) \|\nabla u\|_{L^2(\mathbb{S}^n)}^2\notag\\
		&&+\left({n\choose k}\dfrac{(n-k)(k-j)}{2n}\phi^{n-k-2}(\rho)\phi'^{k}(\rho)\rho^2+O(\varepsilon)\right) \| u\|_{L^2(\mathbb{S}^n)}^2\notag \\
		&&+O(\varepsilon) \|u\|_{L^2(\mathbb{S}^n)}^2+O(\varepsilon)\|\nabla u\|_{L^2(\mathbb{S}^n)}^2 \notag\\
		&\geqslant &{n\choose k}\dfrac{(n-k)(k-j)}{4n}\phi^{n-k-2}(\rho)\phi'^{k}(\rho) \rho^2 \|\nabla u\|_{L^2(\mathbb{S}^n)}^2.  \label{akest}
	\end{eqnarray}
	
	After combining (\ref{akest}) with (\ref{nablau}), we finally conclude that
	\begin{eqnarray}
		\delta_{k,j}(\Omega)&=&\mathscr{A}_k(\Omega) - \mathscr{A}_k(\overline{B}_{\rho})\notag\\
		&\geqslant& \left({n\choose k}\dfrac{n(n-k)(k-j)}{4}\dfrac{\phi^{n-k-2}(\rho)\phi'^{k}(\rho)}{\phi^{2n}(\rho)\mathrm{Area}(\mathbb{S}^n)}+O(\varepsilon)\right)\alpha^2(\Omega)\notag \\
		&=& \left( \dfrac{n(n-k)(k-j)}{4\mathrm{Area}(\mathbb{S}^n)}{n\choose k} \dfrac{\phi'^k(\rho)}{\phi^{n+k+2}(\rho)} + O(\varepsilon) \right)\alpha^2(\Omega).
	\end{eqnarray}
\end{proof}

\section{Stability of weighted quermassintegral inequalities}
\label{section5}

In this section, we discuss the stability of geometric inequalities involving weighted curvature integrals and quermassintegrals for nearly spherical sets in $\mathbb{R}^{n+1}$ and $\mathbb{H}^{n+1}$. 

\subsection{Stability of weighted quermassintegral inequalities in $\mathbb{R}^{n+1}$}
We establish the inequality in $\mathbb{R}^{n+1}$ which states that for any fixed $0\leqslant k\leqslant n$, if $\mathscr{A}_l(\Omega)=\mathscr{A}_l (B)\ (-1\leqslant l<k)$, where $B$ is the unit ball in $\mathbb{R}^{n+1}$, then
\begin{equation*}
	\dfrac{\displaystyle\int_M \Phi\sigma_k(\kappa)\mathrm{d}\mu_g - \int_{\partial B} \Phi\sigma_k(\kappa) \mathrm{d}\mu_g}{\displaystyle\int_{\partial B} \Phi\sigma_k(\kappa) \mathrm{d}\mu_g} \geqslant C\overline{\alpha}^2(\Omega),
\end{equation*}
where $C>0$ is a constant independent of $\Omega$, $\Phi(r)=\displaystyle\int_0^{r} r\mathrm{d}r=\dfrac{1}{2}r^2$ is defined in (\ref{Phi}), and $\overline{\alpha}(\Omega)$ is defined in (\ref{alphaomegaeuclidean}). It is the quantitative version of (\ref{weightedinR}).
\begin{proof}[Proof of Theorem \ref{mainresult2}]
		Taking $\rho=1,\ K=0,\ \phi(r)=r$ in \eqref{PhiinN}, we get that for any nearly spherical set $M=\{(1+u(x)),x):x\in\mathbb{S}^n\}$ in $\mathbb{R}^{n+1}$,
	\begin{eqnarray}
		\int_M \Phi\sigma_k(\kappa)\mathrm{d}\mu_g &=& \dfrac{1}{2}\int_{\mathbb{S}^n} {n\choose k} \mathrm{d}A + \dfrac{1}{2}\int_{\mathbb{S}^n} {n\choose k}(n-k+2)u \mathrm{d}A \notag\\
		&&+ \dfrac{1}{2} \int_{\mathbb{S}^n} {n\choose k}  \dfrac{(n-k+2)(n-k+1)}{2}u^2 \mathrm{d}A \notag\\
		&&  + \dfrac{1}{2} \int_{\mathbb{S}^n} {n\choose k}  \dfrac{(n-k)(k+1)+4k}{2n} |\nabla u|^2 \mathrm{d}A \notag\\
		&&+ O(\varepsilon)\|u\|_{L^2(\mathbb{S}^n)}^2 + O(\varepsilon)\|\nabla u\|_{L^2(\mathbb{S}^n)}^2. \label{PhiinR}
	\end{eqnarray}
	From $\mathscr{A}_{-1}(\Omega)=\mathscr{A}_{-1}(B)$, we have
	\begin{equation}
		\label{a-1inR}
		\int_{\mathbb{S}^n} u \mathrm{d}A = -\int_{\mathbb{S}^n}\dfrac{n}{2}u^2 \mathrm{d}A + O(\varepsilon)\|u\|_{L^2(\mathbb{S}^n)}^2,
	\end{equation}
	and from $\mathscr{A}_j(\Omega) = \mathscr{A}_j(B)\ (0\leqslant j < k)$, we have
	\begin{equation}
		\label{ajinR}
		\int_{\mathbb{S}^n} u \mathrm{d}A = -\int_{\mathbb{S}^n} \dfrac{n-j-1}{2} u^2 \mathbb{d}A - \int_{\mathbb{S}^n} \dfrac{j+1}{2n} |\nabla u|^2 \mathrm{d}A + O(\varepsilon) \|u\|_{L^2(\mathbb{S}^n)}^2 + O(\varepsilon) \|\nabla u\|_{L^2(\mathbb{S}^n)}^2.
	\end{equation}
    These are proved in Proposition 4.3 and Lemma 5.2 of \cite{VanBlargan2022QuantitativeQI}.
	Then by substituting (\ref{a-1inR}) in (\ref{PhiinR}) when $j=-1$ and substituting (\ref{ajinR}) in (\ref{PhiinR}) when $0\leqslant j<k$,
	\begin{eqnarray}
		\int_M \Phi\sigma_k(\kappa) \mathrm{d}\mu_g - \dfrac{1}{2} \int_{\mathbb{S}^n} {n \choose k} \mathrm{d}A &=& \dfrac{1}{2}\int_{\mathbb{S}^n} {n\choose k}\dfrac{(n-k+2)(j-k+2)}{2} u^2 \mathrm{d}A \notag\\
		&& + \dfrac{1}{2} \int_{\mathbb{S}^n} {n\choose k}\dfrac{(n-k+2)(k-j)+2k-2}{2n} |\nabla u|^2 \mathrm{d}A \notag\\
		&& + O(\varepsilon)\|u\|_{L^2(\mathbb{S}^n)}^2 + O(\varepsilon)\|\nabla u\|_{L^2(\mathbb{S}^n)}^2.  \label{phisigmak}
	\end{eqnarray}
	We remark that the coefficient of $\displaystyle\int_{\mathbb{S}^n} |\nabla u|^2 \mathrm{d}A$ in (\ref{phisigmak}) is positive. Therefore, under the condition $\mathrm{bar}(\Omega)=O$, substitute
	\begin{equation*}
		\|\nabla u\|_{L^2(\mathbb{S}^n)}^2 \geqslant 2(n+1) \|u\|_{L^2(\mathbb{S}^n)}^2 + O(\varepsilon)\| u \|^2_{L^2(\mathbb{S}^n)}
	\end{equation*}
	(when $j=-1$) and 
	\begin{equation*}
		\|\nabla u\|_{L^2(\mathbb{S}^n)}^2 \geqslant 2(n+1) \|u\|_{L^2(\mathbb{S}^n)}^2 + O(\varepsilon)\|u\|^2_{L^2(\mathbb{S}^n)} + O(\varepsilon)\|\nabla u\|_{L^2(\mathbb{S}^n)}^2
	\end{equation*}
	(when $0\leqslant j <k $) in (\ref{phisigmak}), we get
	\begin{eqnarray}
		\displaystyle\int_M \Phi\sigma_k(\kappa) \mathrm{d}\mu_g - \int_{\partial B}\Phi\sigma_k(\kappa)\mathrm{d}\mu_g &\geqslant& {n\choose k}\dfrac{n-k+2}{2}  \|u\|_{L^2(\mathbb{S}^n)}^2 \notag\\
		&&+ {n\choose k}\dfrac{(n-k+2)(k-j)+2k-2}{8n} \|\nabla u\|_{L^2(\mathbb{S}^n)}^2 \notag\\
		&& + O(\varepsilon)\|u\|_{L^2(\mathbb{S}^n)}^2 + O(\varepsilon)\|\nabla u\|_{L^2(\mathbb{S}^n)}^2.\notag\\
		&\geqslant& {n\choose k} \dfrac{(n-k+2)(k-j)+2k-2}{8n}\|\nabla u\|_{L^2(\mathbb{S}^n)}^2.
	\end{eqnarray}
	Combining the estimate proved in Lemma 5.3 of \cite{VanBlargan2022QuantitativeQI}:
	\begin{equation}
		\overline{\alpha}^2(\Omega) 
		\leqslant \dfrac{(n+1)^2}{n^2\mathrm{Area}(\mathbb{S}^n)}\|\nabla u\|^2_{L^2(\mathbb{S}^n)} + O(\varepsilon)\|\nabla u\|_{L^2(\mathbb{S}^n)}^2,
	\end{equation}
	we obtain the inequality
	\begin{eqnarray}
		\dfrac{\displaystyle\int_{M}\Phi\sigma_k(\kappa)\mathrm{d}\mu_g-\int_{\partial B}\Phi\sigma_k(\kappa)\mathrm{d}\mu_g}{\displaystyle\int_{\partial B}\Phi\sigma_k(\kappa)\mathrm{d}\mu_g} 
		&\geqslant& \dfrac{(n-k+2)(k-j)+2k-2}{4n\mathrm{Area}(\mathbb{S}^n)}\|\nabla u\|^2_{L^2(\mathbb{S}^n)} \notag\\
		&\geqslant& \left( \dfrac{n \left((n-k+2)(k-j)+2k-2\right)}{4(n+1)^2}+O(\varepsilon) \right) \overline{\alpha}^2(\Omega) \notag
	\end{eqnarray}
	as desired.
\end{proof}

\subsection{Stability of weighted quermassintegral inequalities in $\mathbb{H}^{n+1}$}
We are going to establish the stability inequality in $\mathbb{H}^{n+1}$ which states that for any fixed $0\leqslant k\leqslant n$, $-1\leqslant j<k$, if $\mathscr{A}_j(\Omega)=\mathscr{A}_j(\overline{B}_{\rho})$ holds, then
\begin{equation*}
	\int_M \Phi\sigma_k(\kappa)\mathrm{d}\mu_g - \int_{\partial \overline{B}_{\rho}}\Phi\sigma_k(\kappa) \mathrm{d}\mu_g \geqslant C\alpha^2(\Omega),
\end{equation*}
where $C>0$ is a constant independent of $\Omega$, and $\Phi$ is defined in (\ref{Phi}). This is the quantitative version of (\ref{weightedH}).

\begin{proof}[Proof of \eqref{hqq} in Theorem \ref{mainresult3}]
	Substituting (\ref{volumeq}) (when $j=-1$) and (\ref{ujjj}) (when $0\leqslant j <k$) in (\ref{PhiinN}), using (\ref{hd}) we get
	\begin{eqnarray}
		&&\int_M \Phi\sigma_k(\kappa)  \mathrm{d}\mu_g - \int_{\partial \overline{B}_{\rho}} \Phi\sigma_k(\kappa) \mathrm{d}\mu_g \notag\\
		&=& \int_{\mathbb{S}^n} {n\choose k} \left\lbrace \left[ \dfrac{(n-k)(j-k)}{2}\phi^{n-k-2}(\rho)\phi'^k(\rho) + K\dfrac{k(k-j-2)}{2} \phi^{n-k}(\rho) \phi'^{k-2}(\rho) \right]\Phi(\rho)  \right.  \notag\\
		&&\ \ \ \ \ \ \ \ \ \ \ \ \ \left. + \dfrac{n+1}{2}\phi^{n-k}(\rho)\phi'^{k+1}(\rho) + (\dfrac{j+1}{2}-k)\phi^{n-k}(\rho)\phi'^{k-1}(\rho) \right\rbrace \rho^2 u^2 \mathrm{d}A \notag\\
		&& + \int_{\mathbb{S}^n} {n\choose k} \left\lbrace \left[ \dfrac{(n-k)(k-j)}{2n}\phi^{n-k-2}(\rho)\phi'^k(\rho) - K\dfrac{k(k-j-2)}{2n}\phi^{n-k}(\rho)\phi'^{k-2}(\rho) \right]\Phi(\rho) \right. \notag\\
		&&\ \ \ \ \ \ \ \ \ \ \ \ \ \ \ \ \left. + \dfrac{2k-j-1}{2n} \phi^{n-k}(\rho) \phi'^{k-1}(\rho) \right\rbrace \rho^2 |\nabla u|^2 \mathrm{d}A \notag \\
		&& + O(\varepsilon) \|u\|_{L^2(\mathbb{S}^n)}^2 + O(\varepsilon) \|\nabla u\|_{L^2(\mathbb{S}^n)}^2  \notag\\
		&=&  C_1(n,k,j,\rho)  \|u\|_{L^2(\mathbb{S}^n)}^2 + C_2(n,k,j,\rho) \|\nabla u\|_{L^2(\mathbb{S}^n)}^2+ O(\varepsilon)\|u\|_{L^2(\mathbb{S}^n)}^2 \notag\\
		&& + O(\varepsilon) \|\nabla u\|_{L^2(\mathbb{S}^n)}^2.  \label{down}
	\end{eqnarray}
	
	Notice that when $K=-1$, $\phi'(\rho)=\cosh\rho\geqslant 1$, thus the coefficient of $\displaystyle\int_{\mathbb{S}^n} |\nabla u|^2 \mathrm{d}A$, is positive for $-1\leqslant j<k$. Indeed, by using $\Phi(\rho) = K(1-\phi'(\rho))$, we can compute that
	\begin{eqnarray}
		C_2(n,k,j,\rho) 
		&=& {n\choose k}\phi^{n-k-2}(\rho)\phi'^{k-2}(\rho)\left[ \dfrac{(n-k)(k-j)}{2n}\Phi(\rho) \right.\notag \\
		&&\ \ \ \ \left.+ \phi^2(\rho)\left( \dfrac{n(k-j)-j-1}{2n}\phi'(\rho) - \dfrac{n(k-j)-2k}{2n} \right) \right]\rho^2 > 0. \label{C2coe}
	\end{eqnarray}
	After using the Poincar\'e-type estimate (\ref{egenest}) and (\ref{egenest22}) in (\ref{down}), we get
	\begin{eqnarray}
		&&\int_M \Phi\sigma_k(\kappa) \mathrm{d}\mu_g - \int_{\partial\overline{B}_{\rho}} \Phi\sigma_k(\kappa) \mathrm{d}\mu_g \notag\\
		&\geqslant& \left( C_1(n,k,j,\rho) + C_2(n,k,j,\rho) n \right) \|u\|_{L^2(\mathbb{S}^n)}^2  + \dfrac{1}{2} C_2(n,k,j,\rho) \|\nabla u\|_{L^2(\mathbb{S}^n)}^2  \notag \\
		&&+ O(\varepsilon)\|u\|_{L^2(\mathbb{S}^n)}^2 + O(\varepsilon) \|\nabla u\|_{L^2(\mathbb{S}^n)}^2.\notag\\
		&\geqslant& \dfrac{1}{2} C_2(n,k,j,\rho) \|\nabla u\|_{L^2(\mathbb{S}^n)}^2\notag\\
		&\geqslant& {n\choose k} \dfrac{(n-k)(k-j)}{4n}\phi^{n-k-2}(\rho)\phi'^{k-2}(\rho)\Phi(\rho)\rho^2 \|\nabla u\|_{L^2(\mathbb{S}^n)}^2, \label{philow}
	\end{eqnarray}
	where we used the fact that $C_1(n,k,j,\rho) + C_2(n,k,j,\rho)n\geqslant 0$ in the second inequality by a direct computation.
	
	Finally, combining (\ref{philow}) with the estimate (\ref{nablau}) for $\|\nabla u\|_{L^2(\mathbb{S}^n)}^2$, we obtain the inequality
	\begin{eqnarray}
		&&\int_M \Phi\sigma_k(\kappa) \mathrm{d}\mu_g - \int_{\partial\overline{B}_{\rho}} \Phi\sigma_k(\kappa) \mathrm{d}\mu_g \notag \\
		&\geqslant& \left( {n\choose k} \dfrac{n(n-k)(k-j)}{4}\dfrac{\phi'^{k-2}(\rho)\Phi(\rho)}{\mathrm{Area}(\mathbb{S}^n)\phi^{n+k+2}(\rho)} +O(\varepsilon) \right)\alpha^2(\Omega). \notag
	\end{eqnarray}
\end{proof}

We are now in the position to establish the stability of the second type of weighted inequality in $\mathbb{H}^{n+1}$, which states that for any fixed $0\leqslant k\leqslant n$, $-1\leqslant j<k$, if $\mathscr{A}_j(\Omega)=\mathscr{A}_j(\overline{B}_{\rho})$ holds, then
\begin{equation*}
	\int_M \phi'\sigma_k(\kappa)\mathrm{d}\mu_g - \int_{\partial \overline{B}_{\rho}}\phi'\sigma_k(\kappa) \mathrm{d}\mu_g \geqslant C\alpha^2(\Omega),
\end{equation*}
where $C>0$ is a constant independent of $\Omega$. This is the quantitative version of (\ref{weightedH1}).

\begin{proof}[Proof of \eqref{hqq2} in Theorem \ref{mainresult3}]
	Substituting (\ref{volumeq}) (when $j=-1$) and (\ref{ujjj}) (when $0\leqslant j <k$) in (\ref{phi'sigmak}), using (\ref{hd}) we get
	\begin{eqnarray}
		&&\int_M \phi'\sigma_k(\kappa) \mathrm{d}\mu_g - \int_{\partial \overline{B}_{\rho}} \phi'\sigma_k(\kappa) \mathrm{d}\mu_g\notag\\
		&=& \int_{\mathbb{S}^n} {n\choose k} \left[ \dfrac{(n-k)(j-k)}{2} \phi^{n-k-2}(\rho)\phi'^{k+1}(\rho) - K\dfrac{1+n}{2} \phi^{n-k}(\rho)\phi'^{k+1}(\rho) \right.\notag\\
		&&\ \ \ \ \ \ \ \ \ \ \ \ \left. + K \dfrac{(k+1)(k-j-1)}{2} \phi^{n-k}(\rho)\phi'^{k-1}(\rho) \right] \rho^2 u^2 \mathrm{d}A \notag\\
		&&+ \int_{\mathbb{S}^n} {n\choose k} \left[ \dfrac{(n-k)(k-j)}{2n}\phi^{n-k-2}(\rho)\phi'^{k+1}(\rho) \right.\notag\\
		&&\ \ \ \ \ \ \ \ \ \ \ \ \ \ \left.- K\dfrac{(k+1)(k-j-1)}{2n} \phi^{n-k}(\rho) \phi'^{k-1}(\rho) \right] \rho^2 |\nabla u|^2 \mathrm{d}A \notag\\
		&&+O(\varepsilon)\|u\|_{L^2(\mathbb{S}^n)}^2 + O(\varepsilon)\|\nabla u\|_{L^2(\mathbb{S}^n)}^2 \notag\\
		&=&C_3(n,k,j,\rho) \|u\|^2_{L^2(\mathbb{S}^n)} + C_4(n,k,j,\rho) \|\nabla u\|^2_{L^2(\mathbb{S}^n)} \notag\\
		&&+ O(\varepsilon) \|u\|_{L^2(\mathbb{S}^n)}^2 + O(\varepsilon) \|\nabla u\|_{L^2(\mathbb{S}^n)}^2.  \label{phi'sig}
	\end{eqnarray}
	
	Note that when $K=-1$, $C_4(n,k,j,\rho)>0$, thus by the Poincar\'e-type estimate (\ref{egenest}) and (\ref{egenest22}), we have
	\begin{eqnarray}
		&&\int_M \phi'\sigma_k(\kappa) \mathrm{d}\mu_g - \int_{\partial \overline{B}_{\rho}} \phi'\sigma_k(\kappa) \mathrm{d}\mu_g \notag \\
		&\geqslant& (C_3(n,k,j,\rho) + C_4(n,k,j,\rho)n) \|u\|_{L^2(\mathbb{S}^n)}^2 + \dfrac{1}{2}C_4(n,k,j,\rho) \|\nabla u\|_{L^2(\mathbb{S}^n)}^2 \notag\\
		&&+O(\varepsilon)\|u\|_{L^2(\mathbb{S}^n)}^2 + O(\varepsilon) \|\nabla u\|_{L^2(\mathbb{S}^n)}^2 \notag\\
		&\geqslant& \dfrac{1}{2} C_4(n,k,j,\rho) \|\nabla u\|_{L^2(\mathbb{S}^n)}^2 \notag\\
		&\geqslant& {n\choose k}\dfrac{(n-k)(k-j)}{4n} \phi^{n-k-2}(\rho)\phi'^{k+1}(\rho) \rho^2 \|\nabla u\|_{L^2(\mathbb{S}^n)}^2, \label{phi'est}
	\end{eqnarray}
	where we used the fact that $C_3(n,k,j,\rho)+C_4(n,k,j,\rho)n\geqslant 0$ in the second inequality by a direct computation.
	
	Finally, combining (\ref{phi'est}) with the estimate (\ref{nablau}), we conclude that
	\begin{eqnarray}
		&&\int_M \phi'\sigma_k(\kappa) \mathrm{d}\mu_g - \int_{\partial \overline{B}_{\rho}} \phi'\sigma_k(\kappa) \mathrm{d}\mu_g \notag \\
		&\geqslant& \left({n\choose k} \dfrac{n(n-k)(k-j)}{4} \dfrac{\phi'^{k+1}(\rho)}{\mathrm{Area}(\mathbb{S}^n)\phi^{n+k+2}(\rho)} + O(\varepsilon) \right)\alpha^2(\Omega).  \notag
	\end{eqnarray}
\end{proof}

We remark that the quantitative weighted quermassintegral inequalities in $\mathbb{S}^{n+1}$ can also be proved using this method under the same condition as in Theorem \ref{mainresult3} for some special $j<k$. However, we can't confirm that the coefficient of $\displaystyle\int_{\mathbb{S}^n} |\nabla u|^2 \mathrm{d}A$  is positive for all $-1\leqslant j<k\ (0\leqslant k\leqslant n)$ in the approximate expression of the curvature integral deficit. Though we believe the quantitative weighted quermassintegral inequalities hold for all $j<k$ in $\mathbb{S}^{n+1}$.

\begin{ack}
The authors would like to thank Professor Yong Wei for his helpful discussions and constant support. The authors were supported by National Key Research and Development Program of China 2021YFA1001800.
\end{ack}

\bibliographystyle{plainnat}
\bibliography{reference.bib}

\begin{thebibliography}{24}
\providecommand{\natexlab}[1]{#1}
\providecommand{\url}[1]{\texttt{#1}}
\expandafter\ifx\csname urlstyle\endcsname\relax
  \providecommand{\doi}[1]{doi: #1}\else
  \providecommand{\doi}{doi: \begingroup \urlstyle{rm}\Url}\fi

\bibitem[B{\"o}gelein et~al.(2015)B{\"o}gelein, Duzaar, and
  Scheven]{Bgelein2015ASQ}
Verena B{\"o}gelein, Frank Duzaar, and Christoph Scheven.
\newblock A sharp quantitative isoperimetric inequality in hyperbolic
  $n$-space.
\newblock \emph{Calculus of Variations and Partial Differential Equations},
  54:\penalty0 3967--4017, 2015.

\bibitem[B{\"o}gelein et~al.(2016)B{\"o}gelein, Duzaar, and
  Fusco]{Bgelein2016AQI}
Verena B{\"o}gelein, Frank Duzaar, and Nicola Fusco.
\newblock A quantitative isoperimetric inequality on the sphere.
\newblock \emph{Advances in Calculus of Variations}, 10\penalty0 (3):\penalty0
  223--265, 2016.

\bibitem[Brendle et~al.(2016)Brendle, Hung, and Wang]{B-H-W}
Simon Brendle, Pei-Ken Hung, and Mu-Tao Wang.
\newblock A {Minkowski} inequality for hypersurfaces in the {Anti}-de
  {Sitter-Schwarzschild} manifold.
\newblock \emph{Communications on Pure and Applied Mathematics}, 69\penalty0
  (1):\penalty0 124--144, 2016.

\bibitem[Brendle et~al.(2018)Brendle, Guan, and Li]{B-G-L}
Simon Brendle, Pengfei Guan, and Junfang Li.
\newblock An inverse curvature type hypersurface flow in {$\mathbb{H}^{n+1}$}.
\newblock \emph{preprint}, 2018.

\bibitem[Cicalese and Leonardi(2012)]{Cicalese2012ASP}
Marco Cicalese and Gian~Paolo Leonardi.
\newblock A selection principle for the sharp quantitative isoperimetric
  inequality.
\newblock \emph{Archive for Rational Mechanics and Analysis}, 206:\penalty0
  617--643, 2012.

\bibitem[de~Lima and Gir{\~a}o(2016)]{Lima-Girao}
Levi~Lopes de~Lima and Frederico Gir{\~a}o.
\newblock An {Alexandrov–Fenchel-Type} inequality in hyperbolic space with an
  application to a penrose inequality.
\newblock \emph{Annales Henri Poincar{\'e}}, 17:\penalty0 979--1002, 2016.

\bibitem[Fuglede(1986)]{Fuglede1986StabilityIT}
Bent Fuglede.
\newblock Stability in the isoperimetric problem.
\newblock \emph{Bulletin of The London Mathematical Society}, 18:\penalty0
  599--605, 1986.

\bibitem[Fuglede(1989)]{Fuglede1989StabilityIT}
Bent Fuglede.
\newblock Stability in the isoperimetric problem for convex or nearly spherical
  domains in $\mathbb{R}^n$.
\newblock \emph{Transactions of the American Mathematical Society},
  314\penalty0 (2):\penalty0 619--638, 1989.

\bibitem[Fusco and {La Manna}(2023)]{FUSCO2023109946}
Nicola Fusco and Domenico~Angelo {La Manna}.
\newblock Some weighted isoperimetric inequalities in quantitative form.
\newblock \emph{Journal of Functional Analysis}, 285\penalty0 (2):\penalty0
  109946, 2023.

\bibitem[Gavitone et~al.(2020)Gavitone, Manna, Paoli, and Trani]{Gavitone}
Nunzia Gavitone, Domenico Angelo~La Manna, Gloria Paoli, and Leonardo Trani.
\newblock A quantitative {Weinstock} inequality for convex sets.
\newblock \emph{Calculus of Variations and Partial Differential Equations},
  59\penalty0 (2), 2020.

\bibitem[Ge et~al.(2015)Ge, Wang, and Wu]{Ge-Wang-Wu}
Yuxin Ge, Guofang Wang, and Jie Wu.
\newblock The {GBC} mass for asymptotically hyperbolic manifolds.
\newblock \emph{Mathematische Zeitschrift}, 281:\penalty0 257--297, 2015.

\bibitem[Gir{\~a}o and Rodrigues(2020)]{Girao-Rodrigues}
Frederico Gir{\~a}o and Diego Rodrigues.
\newblock Weighted geometric inequalities for hypersurfaces in sub-static
  manifolds.
\newblock \emph{Bulletin of the London Mathematical Society}, 52\penalty0
  (5):\penalty0 121--136, 2020.

\bibitem[Glaudo(2022)]{Glaudo2021MinkowskiIF}
Federico Glaudo.
\newblock Minkowski inequality for nearly spherical domains.
\newblock \emph{Advances in Mathematics}, 408:\penalty0 108595, 2022.

\bibitem[Guan and Li(2009)]{GL09}
P.~Guan and J.~Li.
\newblock The quermassintegral inequalities for k-convex starshaped domains.
\newblock \emph{Adv. Math.}, 221:\penalty0 1725--1732, 2009.

\bibitem[Guan(2014)]{Guan2014}
Pengfei Guan.
\newblock \emph{Curvature Measures, Isoperimetric Type Inequalities and Fully
  Nonlinear PDEs}, pages 47--94.
\newblock Springer International Publishing, Cham, 2014.

\bibitem[Guan and Li(2015)]{Guan2013AMC}
Pengfei Guan and Junfang Li.
\newblock A mean curvature type flow in space forms.
\newblock \emph{International Mathematics Research Notices}, 2015\penalty0
  (13):\penalty0 4716--4740, 2015.

\bibitem[Guan and Li(2021)]{Li2021IsoperimetricTI}
Pengfei Guan and Junfang Li.
\newblock Isoperimetric type inequalities and hypersurface flows.
\newblock \emph{The Journal of Men's Studies}, 54:\penalty0 56--80, 2021.

\bibitem[Hu et~al.(2022)Hu, Li, and Wei]{Locallyconstrained}
Yingxiang Hu, Haizhong Li, and Yong Wei.
\newblock Locally constrained curvature flows and geometric inequalities in
  hyperbolic space.
\newblock \emph{Mathematische Annalen}, 382:\penalty0 1425--1474, 2022.

\bibitem[Kwong and Miao(2015)]{Kwong-Miao15}
Kwok-Kun Kwong and Pengzi Miao.
\newblock Monotone quantities involving a weighted $\sigma_k$ integral along
  inverse curvature flows.
\newblock \emph{Communications in Contemporary Mathematics}, 17\penalty0
  (5):\penalty0 1550014, 2015.

\bibitem[Scheuer(2023)]{Scheuer2023Stability}
Julian Scheuer.
\newblock Stability from rigidity via umbilicity.
\newblock \emph{arXiv:2103.07178v2}, 2023.

\bibitem[Solanes(2005)]{G-Solanes05}
Gil Solanes.
\newblock Integral geometry and the {Gauss-Bonnet} theorem in constant
  curvature spaces.
\newblock \emph{Transactions of the American Mathematical Society},
  358:\penalty0 1105--1115, 2005.

\bibitem[VanBlargan and Wang(2022{\natexlab{a}})]{VanBlargan2022QuantitativeQI}
Caroline VanBlargan and Yi~Wang.
\newblock Quantitative quermassintegral inequalities for nearly spherical sets.
\newblock \emph{arXiv:2201.04256}, 2022{\natexlab{a}}.

\bibitem[VanBlargan and Wang(2022{\natexlab{b}})]{VanBlargan2022StabilityOQ}
Caroline VanBlargan and Yi~Wang.
\newblock Stability of quermassintegral inequalities along inverse curvature
  flows.
\newblock \emph{arXiv:2208.14341}, 2022{\natexlab{b}}.

\bibitem[Wei and Zhou(2023)]{Wei2022NewWG}
Yong Wei and Tailong Zhou.
\newblock New weighted geometric inequalities for hypersurfaces in space forms.
\newblock \emph{Bulletin of the London Mathematical Society}, 55\penalty0
  (1):\penalty0 263--281, 2023.

\end{thebibliography}

\end{document}